\theoremstyle{plain}
\newtheorem{thm}{Theorem}[section]
\newtheorem{cor}[thm]{Corollary}
\newtheorem{lem}[thm]{Lemma}
\newtheorem{prop}[thm]{Proposition}
\theoremstyle{definition}
\newtheorem{defn}[thm]{Definition}
\newtheorem{obs}[thm]{Remark}
\newtheorem{ex}[thm]{Example}
\theoremstyle{definition}
\DeclareMathOperator{\Td}{Rfd}
\DeclareMathOperator{\GF}{GF}
\DeclareMathOperator{\Gfd}{Gfd}
\DeclareMathOperator{\fd}{fd}
\DeclareMathOperator{\id}{id}
\DeclareMathOperator{\pd}{pd}
\DeclareMathOperator{\HH}{H}
\DeclareMathOperator{\Hom}{Hom}
\DeclareMathOperator{\Ext}{Ext}
\DeclareMathOperator{\M}{\mathcal{M}}
\DeclareMathOperator{\Tor}{Tor}
\DeclareMathOperator{\coker}{coker}
\DeclareMathOperator{\depth}{depth}
\DeclareMathOperator{\Ker}{Ker}
\DeclareMathOperator{\GI}{GI}
\DeclareMathOperator{\Gid}{Gid}
\DeclareMathOperator{\CF}{CF}
\newcommand{\X}{\mathcal{X}}
\newcommand{\op}{\mathrm{op}}
\newcommand{\Z}{\mathbb{Z}}
\newcommand{\ol}{\overline}
\newcommand{\xra}{\xrightarrow}
\newcommand{\vf}{\varphi}
\newcommand{\p}{\mathfrak{p}}
\renewcommand{\geq}{\geqslant}
\renewcommand{\leq}{\leqslant}
\numberwithin{equation}{thm}
\begin{document}

\bibliographystyle{amsplain}

\author{Sean Sather-Wagstaff}

\address{Mathematics Dept.,
NDSU Dept \# 2750,
PO Box 6050,
Fargo, ND 58108-6050
USA}

\email{sean.sather-wagstaff@ndsu.edu}

\urladdr{http://www.ndsu.edu/pubweb/\~{}ssatherw/}

\thanks{Sean Sather-Wagstaff was supported in part by a grant from the NSA.
Tirdad Sharif was supported by a grant from IPM (No. 83130311)}


\author{Tirdad Sharif}
\address{School of Mathematics
Institute for Studies in
Theoretical Physics and Mathematics
P. O. Box 19395-5746
Tehran, Iran}
\email{sharif@ipm.ir}

\author{Diana White}
\address{University of Colorado Denver,
Campus Box 170, 
PO Box 173364,
Denver, CO 80217-3364, USA}
\email{diana.white@ucdenver.edu}
\urladdr{http://math.ucdenver.edu/\~{}diwhite}

\title
{Relative Homology and Gorenstein Flat Dimension}

\keywords{cotorsion modules, Gorenstein flat dimension, Gorenstein injective dimension, relative cohomology}
\subjclass[2010]{13C11, 13C12, 13D02, 13D05, 13D07} 

\begin{abstract}
We use the machinery of relative homological algebra to study modules of finite Gorenstein flat dimension. 
\end{abstract}

\maketitle

\section*{Introduction}

\noindent
\textbf{Convention.}
In this paper $R$ is a commutative noetherian ring with nonzero
identity, 
$\M(R)$ is the class of (unital) $R$-modules, and
$\X$ is a class of $R$-modules.

\

The point of this paper is to investigate the $R$-modules of finite Gorenstein flat dimension
through the methods of relative homological algebra. (See Section~\ref{sec01} for background information.)
The basic idea is not a new one: if an $R$-module $M$ admits a ``proper $\X$-resolution'' $X\to M$,
then $X$ is uniquely determined up to homotopy equivalence. This implies that the 
functors $\Tor^{\X}_i(M,-):=\HH_i(X\otimes_R-)$
and $\Ext_{\X}^i(M,-):=\HH^i(\Hom_R(X,-)$ are well-defined.
These functors provide non-trivial information about how $M$ interacts with the class $\X$.
For instance, vanishing of these functors can characterize when $M$ admits a bounded resolution
by modules in $X$. Motivating examples of this are in~\cite{AvM, EJ3, H2}; see also~\cite{sather:gcac, sather:crct}.

Each of the above cited  examples focuses primarily on deriving traditionally:
one uses classes of projective-like modules for Ext, and flat-like modules for Tor.
However, such restrictions are not required, as one sees in~\cite{aldrich:dfhrfc}
where $\Ext_{\mathcal F}^i(M,-)$ is studied for  the class $\mathcal F$ of flat $R$-modules. 

The current paper considers a similar construction $\Ext^i_{\GF}(M,-)$ where $\GF(R)$ is the class of Gorenstein flat modules.
Section~\ref{sec02} develops the foundations of $\Ext^i_{\GF}(M,-)$. 
For instance, we prove the following result showing that the vanishing of $\Ext^i_{\GF}(M,-)$ detects finiteness of $\Gfd_RM$.
Given the fact that we are deriving Hom in a non-traditional fashion, this result may be somewhat surprising.

\

\noindent
\textbf{Theorem~\ref{thm14}.}
\emph{For an $R$-module $M$ and an integer
$g\geq 0$, the following 
conditions are equivalent:
\begin{enumerate}[\rm(i)]
\item 
$\Gfd_R M\leqslant g$;
\item 
$\Ext_{\GF}^n (M,-) = 0$ for each $n>g$;
\item 
$\Ext_{\GF}^{g+1}(M,-) = 0$;
\item 
For each Gorenstein flat resolution
$G\to  M$ and each integer $n\geq g$ the cokernel
$\Omega^nG$ is Gorenstein flat;
\item 
M admits a proper Gorenstein flat resolution
$G\to  M$ such that the cokernel
$\Omega^gG$ is Gorenstein flat.
\end{enumerate}}

\

\noindent
We also study some aspects of $\Tor_i^{\GF}(M,-)$ in this section.

From~\cite{H}, we know that the class $\CF(R)$ of cotorsion $R$-modules of finite flat dimension
is useful for studying Gorenstein flat dimension. 
In Section~\ref{sec130711a}, we prove the following approximation result, \`a la~\cite{auslander:htmcma, H}.

\

\noindent
\textbf{Theorem~\ref{thm04}.}
\emph{If $M$ is an $R$-module
with $\Gfd_R M <\infty$, then $M$ has a monic CF-preenvelope   
$\varphi \colon M \to K$
such that
$\fd_R K  = \Gfd_R M$
and
$\coker \varphi$
is Gorenstein~flat.}

\

\noindent
We use this result in two distinct ways. 

First, in Section~\ref{sec130711a} we study some base-change behavior of Gorenstein flat dimension
and the associated relative (co)homology functors.
In particular, we provide a counterexample to (and corrected versions of) two results of~\cite{sahandi:dabf}.

Second, in Section~\ref{sec130705a} we study the homological dimension $\operatorname{CF-id}_RM$
and the relative derived functors $\Ext^i_{\CF}(-,M)$ defined in terms of
(proper) CF coresolutions of $M$.
For instance, we use these functors to characterize cotorsion properties of modules of finite Gorenstein flat dimension.

\

\noindent\textbf{Theorem~\ref{thm07}.}
\emph{Let $M$ be an R-module with
$\operatorname{Gfd}_RM<\infty$, and let $p\in\mathbb N$ be given. 
\begin{enumerate}[\rm(a)]
\item
The module $M$ is $p$-cotorsion with $\operatorname{fd}_RM<\infty$ if and only
if
$\operatorname{Ext}^n_{\CF}(G,M)=0=\operatorname{Ext}^n_R(G, M)$
for
$n>p$ and each $G\in \operatorname{GF}(R)$.
\item
Then $M$ is strongly $p$-cotorsion with $\operatorname{fd}_RM<\infty$ if
and only if
$\operatorname{Ext}^n_{\CF}(N,M)=0=\operatorname{Ext}^n_R(N,M)$ for
$n>p$ and for each R-module $N$ with
$\operatorname{Gfd}_RN<\infty$.
\end{enumerate}}

\

\noindent
The paper concludes with a brief discussion of the relation between 
the ``large restricted injective dimension'' of $M$ and $\Gid_RM$.

\section{Prerequisites} \label{sec01}

Here we outline foundational notions used throughout the paper.

\begin{defn}
In this paper, $R$-complexes (a.k.a., chain complexes of $R$-modules) are indexed homologically
\[
X=\cdots \to X_{\ell+1}
\xra{\partial^X_{\ell+1}} X_{\ell}
\xra{\partial^X_{\ell}} X_{\ell-1}\cdots
\]
except where we specify otherwise.
A morphism $X\to Y$ between $R$-complexes  is another word for ``chain map''.
\end{defn}

In this paper cotorsion modules play a major role.  

\begin{defn}
An $R$-module
$K$  is \emph{$n$-cotorsion} if
$\Ext_{R}^i(F,K)=0$ for each $i>n$ and each flat $R$-module $F$,
and $K$ is \emph{strongly $n$-cotorsion} if
$\Ext_{R}^i(X,K)=0$ for each $i>n$ and each $R$-module
$X$ of finite flat dimension.
The terms \emph{cotorsion} and \emph{strongly cotorsion}
are used for the case $n=0$.
We use the notation $\CF(R)$ for the class 
of $R$-modules consisting of the cotorsion modules of finite flat dimension.
\end{defn}

\begin{obs}\label{rmk130706a}
An $R$-module $M$ is in $\CF(R)$ if and only if there is an exact sequence
$$0\to F_n\to\cdots\to F_0\to M\to 0$$
such that each $F_i$ is flat and cotorsion, by~\cite[Proposition 4.6]{sather:abc}.
\end{obs}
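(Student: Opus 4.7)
The plan is to prove the two implications separately, leveraging the structure of the flat cotorsion pair over $R$.

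For the implication that the existence of such a resolution forces $M\in\CF(R)$, I would proceed by induction on $n$. The case $n=0$ is immediate, since then $M\cong F_0$ is flat and cotorsion. For the inductive step, write $0\to K_1\to F_0\to M\to 0$; here $K_1$ admits a resolution of length $n-1$ by flat cotorsion modules, so by induction $K_1\in\CF(R)$. The short exact sequence gives $\fd_R M\leq n<\infty$, and for any flat $R$-module $F$ the long exact sequence $\Ext^i_R(F,F_0)\to\Ext^i_R(F,M)\to\Ext^{i+1}_R(F,K_1)$ combined with the cotorsion hypothesis on $F_0$ and on $K_1$ (inductively) forces $\Ext^i_R(F,M)=0$ for every $i\geq 1$.

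For the converse, assume $M\in\CF(R)$ and set $n:=\fd_R M$. I would build the desired resolution one step at a time using flat covers. Take a surjective flat (pre)cover $\vf_0\colon F_0\twoheadrightarrow M$; by the Bican--El Bashir--Enochs theorem on completeness of the flat cotorsion pair, the kernel $K_1$ is cotorsion, and standard dimension-shifting along the defining sequence gives $\fd_R K_1\leq n-1$. The short exact sequence $0\to K_1\to F_0\to M\to 0$, together with extension-closure of the cotorsion class (read off from the $\Ext_R(F,-)$ long exact sequence for flat $F$), then forces $F_0$ itself to be cotorsion. Iterating produces flat cotorsion modules $F_0,\ldots,F_{n-1}$ together with an $n$-th syzygy $K_n$ that is cotorsion and satisfies $\fd_R K_n\leq 0$; hence $K_n$ is flat, and setting $F_n:=K_n$ completes the resolution.

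The main obstacle is the input from the theory of flat covers: one needs that every $R$-module admits a surjective flat precover whose kernel is cotorsion, i.e., completeness of the flat cotorsion pair. This is the content of the Bican--El Bashir--Enochs theorem, and is precisely the ingredient that Proposition~4.6 of~\cite{sather:abc} is built upon; everything else reduces to routine Ext-sequence bookkeeping and a clean induction on $\fd_R M$.
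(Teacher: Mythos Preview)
The paper does not supply an in-text proof of this remark; it simply defers to \cite[Proposition~4.6]{sather:abc}. Your argument is correct and is precisely the standard route that reference takes: one direction is a routine induction on the length of the resolution using the $\Ext_R(F,-)$ long exact sequence, and the other direction iterates the special flat precover coming from completeness of the flat cotorsion pair (equivalently, flat covers plus Wakamatsu's Lemma, as recorded in Remark~\ref{rmk01}), using extension-closure of cotorsion modules to see that each $F_i$ is cotorsion and dimension-shifting to terminate after $\fd_R M$ steps.

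One small terminological point: when you write ``flat (pre)cover'' and then invoke the cotorsion kernel, make sure you mean either a \emph{special} flat precover (the approximation furnished directly by completeness of the cotorsion pair) or an honest flat cover (to which Wakamatsu's Lemma applies). An arbitrary flat precover need not have cotorsion kernel, so the parenthetical ``(pre)'' is slightly misleading; your citation of completeness of the pair shows you have the right object in mind.
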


The primary focus of this paper is relative homology with respect to
the class of Gorenstein flat modules, as
introduced by Enochs, Jenda, and Torrecillas~\cite{EJ1,enochs:gf}. 

\begin{defn}
An exact complex $F$ of flat
$R$-modules is a \emph{complete flat resolution} if
$J\otimes_{R}F$ is exact for each injective $R$-module $J$.  An
$R$-module is \emph{Gorenstein flat} if it is a cokernel of a
complete flat resolution.  The 
class 
of  Gorenstein flat $R$-modules is
denoted  $\GF(R)$.

An exact complex $E$ of injective $R$-modules is a
\emph{complete injective resolution} if $\Hom_{R}(J,E)$ is exact for each
injective $R$-module $J$. An $R$-module is \emph{Gorenstein
injective} if it is a cokernel of a complete injective resolution.
The class 
of $R$-Gorenstein injective
$R$-modules is denoted  $\GI(R)$.
\end{defn}

The notions of (pre)covers and (pre)envelopes
were defined by Enochs in~\cite{E1}.  For more information
see~\cite{Xu}.

\begin{defn}
An $R$-module homomorphism $\varphi\colon X\to  M$
with $X\in \X$
is  an
\emph{$\X$-precover} if, for each $Y\in \X$, the following map is surjective:
$$
\Hom_R(Y,\varphi)\colon \Hom_R(Y,X)\to   \Hom_R(Y,M).
$$
An $\X$-precover $\varphi\colon X\to M$
is an \emph{$\X$-cover} if every endomorphism $f\colon X\to   X$
such that
$\varphi f = \varphi$
is an automorphism.

A homomorphism
$\varphi\colon M \to   X$
with $X\in\X$ is  an \emph{$\X$-preenvelope} of $M$ if,
for each $Y\in \X$,
the following map is
surjective: 
$$
\Hom_{R}(\varphi,Y)\colon\Hom_{R}(X,Y)\to  \Hom_{R}(M,Y).
$$
An
$\X$-preenvelope
$\varphi\colon M\to   X$
is an \emph{$\X$-envelope} if each endomorphism
$f\colon X\to   X$
such that
$f\varphi =\varphi$
is an automorphism.
\end{defn}

\begin{obs}
If $\mathcal X$ contains all projective $R$-modules, then every $\X$-precover is surjective.
On the other hand, if $\mathcal Y$ is a class of $R$-modules containing all injective $R$-modules, then every
$\mathcal Y$-preenvelope is injective.
For instance, this applies when $\X=\operatorname{GF}(R)$ and $\mathcal Y=\operatorname{GI}(R)$.
\end{obs}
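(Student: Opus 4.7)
The plan is to prove each assertion by a straightforward factorization through a suitable projective (respectively injective) module, using the defining lifting/extension property of precovers and preenvelopes.

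First, I would treat the precover statement. Given an $\X$-precover $\varphi\colon X\to M$, the goal is to show $\varphi$ is surjective. The idea is to compare $\varphi$ against a surjection out of a projective module. Every $R$-module $M$ admits a surjection $\pi\colon P\to M$ with $P$ projective (for instance, the canonical map from a free module on a generating set of $M$). By hypothesis, $P\in\X$, so applying the $\X$-precover property of $\varphi$ to $\pi$ produces a morphism $f\colon P\to X$ with $\varphi f=\pi$. Since $\pi$ is surjective, so is $\varphi$.

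Next, I would treat the preenvelope statement dually. Given a $\mathcal Y$-preenvelope $\varphi\colon M\to X$, the aim is to prove $\varphi$ is injective. Every $R$-module $M$ embeds into an injective module via some monomorphism $\iota\colon M\to E$ (for example, the injection into the injective hull). Since $E\in\mathcal Y$ by hypothesis, the preenvelope property applied to $\iota$ yields $g\colon X\to E$ with $g\varphi=\iota$. As $\iota$ is injective, $\varphi$ must be injective.

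There is no real obstacle here; the argument is essentially bookkeeping with the universal property. The only substantive input beyond the definitions is the existence of a projective surjection onto any module and an injective embedding of any module, both of which are completely standard and used without comment throughout the paper. The applications to $\X=\GF(R)$ and $\mathcal Y=\GI(R)$ then follow because every projective module is Gorenstein flat and every injective module is Gorenstein injective.
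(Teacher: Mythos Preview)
Your argument is correct and is exactly the standard justification for this remark. The paper itself states this observation without proof, so there is nothing to compare against; your factorization through a projective surjection (respectively an injective embedding) is precisely the intended reasoning.
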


\begin{defn}
We write pd for projective dimension, fd for flat
dimension, and id for injective dimension.
A \emph{Gorenstein flat resolution} of an $R$-module $M$ is a complex of Gorenstein flat 
modules
$$ 
G : \cdots\to
G_g\to \cdots G_1\to G_0\to 0
$$
with an augmentation map $G_0\to M$ such that the
augmented complex
$$ G^{+}: \cdots\to
G_g\to \cdots G_1\to G_0\to
M\to   0
$$
is exact. 
A projective or flat resolution of $M$ is a Gorenstein flat resolution.
We  write $G\to M$ to denote a
Gorenstein flat resolution of $M$, and we set
$$\operatorname{Gfd}_R M =\inf\{\sup\{i\geq 0\mid G_i\neq 0\}\mid\text{$G\to M$ is a Gorenstein flat resolution}\}.
$$
The notions of 
\emph{Gorenstein injective resolution} and \emph{CF coresolution} are  dual, 
with
\begin{gather*}
\operatorname{Gid}_R M =\inf\{\sup\{i\geq 0\mid G_{-i}\neq 0\}\mid\text{$M\to G$ is a Gorenstein injective resolution}\}\\
\operatorname{CF-id}_R M =\inf\{\sup\{i\geq 0\mid G_{-i}\neq 0\}\mid\text{$M\to G$ is a CF coresolution}\}.
\end{gather*}
\end{defn}

\begin{defn}
A complex $X$ of $R$-modules
is  \emph{GF-exact} if $\Hom_R(G,X)$ is
homologically trivial for each $G\in \GF(R)$.  Since 
$R$ is in $\GF(R)$, each GF-exact complex is
exact. A \emph{proper Gorenstein flat resolution} of an
$R$-module $M$ is a Gorenstein flat resolution $G$ of $M$ such that
the augmented complex $G^+$ is GF-exact.  
The notions of  \emph{CF-coexact} and
\emph{proper CF coresolution} are defined dually.
\end{defn}

\begin{obs} \label{rmk01}
From~\cite[3.3]{enochs:egfc} we know that every $R$-module has a GF-cover.
Hence, every $R$-module has a proper Gorenstein flat resolution.
From~\cite[Theorem 3]{bican:amhfc} we know that every $R$-module $M$ has a  flat cover $F\xra\phi M$,
and Wakamatsu's Lemma~\cite[7.2.3]{EJ3} implies that $\Ker(\phi)$ is cotorsion.
In particular, if $M$ is cotorsion, then $F\in \operatorname{CF}(R)$.
\end{obs}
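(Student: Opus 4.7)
The plan is to handle each of the three assertions of the remark in turn; each is a short consequence of a cited existence theorem combined with a formal closure argument.

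First, I would invoke~\cite[3.3]{enochs:egfc} directly to obtain a $\GF(R)$-cover $\gamma\colon G_0\to M$ for every $R$-module $M$. Since every projective $R$-module lies in $\GF(R)$, the earlier observation about precovers forces $\gamma$ to be surjective. Setting $K_0=\Ker\gamma$ and iterating with $K_0$ in place of $M$ yields a left resolution $G^+\colon \cdots\to G_1\to G_0\to M\to 0$ by Gorenstein flat modules. To see that this resolution is proper, I would fix $H\in \GF(R)$ and invoke the precover property of each stage $G_i\to K_{i-1}$ to conclude that $\Hom_R(H,G_i)\to \Hom_R(H,K_{i-1})$ is surjective; splicing the short exact sequences $0\to K_i\to G_i\to K_{i-1}\to 0$ together and tracking cycles then yields exactness of $\Hom_R(H,G^+)$.

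For the second assertion, the existence of a flat cover $\phi\colon F\to M$ is exactly the content of~\cite[Theorem 3]{bican:amhfc}. Since the class of flat modules is closed under extensions, Wakamatsu's Lemma~\cite[7.2.3]{EJ3} applies and delivers $\Ext_R^1(F',\Ker\phi)=0$ for every flat $R$-module $F'$, which is the definition of $\Ker\phi$ being cotorsion.

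Finally, assuming in addition that $M$ is cotorsion, I would apply $\Hom_R(F',-)$ for an arbitrary flat $R$-module $F'$ to the short exact sequence $0\to \Ker\phi\to F\to M\to 0$. Both end terms are cotorsion, so $\Ext_R^i(F',\Ker\phi)=0=\Ext_R^i(F',M)$ for $i\geq 1$, and the long exact sequence then forces $\Ext_R^i(F',F)=0$ for $i\geq 1$; hence $F$ is cotorsion, and since $F$ is already flat this places it in $\CF(R)$. I do not anticipate a substantive obstacle, as the entire remark is bookkeeping around standard constructions; the only mildly subtle point is recognizing that it is the precover (rather than merely cover) property that governs the lifting needed to make the iteratively constructed Gorenstein flat resolution proper.
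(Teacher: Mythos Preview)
The paper offers no proof for this remark; it is stated as an observation with citations, leaving the routine verifications implicit. Your proposal correctly supplies those verifications and follows exactly the route the citations suggest: iterate $\GF$-precovers to build a proper resolution, invoke Wakamatsu's Lemma for the flat-cover kernel, and use the long exact sequence on $0\to\Ker\phi\to F\to M\to 0$ to conclude $F$ is cotorsion.

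One small remark: the paper's definition of \emph{cotorsion} requires $\Ext^i_R(F',K)=0$ for every $i\geq 1$, while Wakamatsu's Lemma gives only $\Ext^1$-vanishing. You write that this ``is the definition,'' which is not literally true here; the two conditions are nonetheless equivalent because the first syzygy of a flat module in a projective resolution is again flat (kernel of a surjection of flats is flat, via the long exact sequence in $\Tor$), so $\Ext^1$-vanishing against all flats propagates to higher degrees by dimension-shifting. This is standard and does not affect the validity of your argument.
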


The next  results indicate the power of proper 
resolutions.  The proofs are nearly identical to those 
of~\cite[(4.3) and (4.5)]{AvM}, so we omit them here;
see also~\cite[(2.2)]{H2}. 

\begin{lem} \label{lem01} 
Let $M,M'$ be $R$-modules and assume that $M'$ admits a 
proper Gorenstein flat resolution $\gamma'\colon G' \to M'$.  
Fix a Gorenstein flat resolution 
$\gamma\colon G \to M$ and projective resolutions
$\pi\colon P\to M$ and $\pi'\colon P'\to M'$. 
\begin{enumerate}[\rm(a)]
\item \label{item26}
There exist morphisms of complexes $\phi\colon P\to G$ and
$\phi'\colon P'\to G'$ such that $\pi=\gamma\phi$ and 
$\pi'=\gamma'\phi'$.  The morphisms $\phi$ and $\phi'$ are unique up 
to homotopy.
\item \label{item27}
For every homomorphism
$\mu\colon M\to M'$ there is a morphism of complexes
$\widetilde{\mu}\colon G\to G'$, unique up to homotopy, making the
right-hand square of the following diagram commute.
$$\xymatrix{
P \ar[r]^{\phi}\ar[d]_{\ol{\mu}} & G \ar[r]^{\gamma} \ar[d]_{\widetilde{\mu}} & M \ar[d]_{\mu} \\
P' \ar[r]^{\phi'} & G' \ar[r]^{\gamma'} & M'.}$$
For each such $\widetilde{\mu}$ there exists a morphism $\ol{\mu}\colon 
P\to P'$, unique up to homotopy, making the left-hand square of 
the diagram commute up to homotopy.
If $\mu=id_M$ and $G$ is also proper, then $\widetilde{\mu}$ and
$\overline{\mu}$ are homotopy
equivalences.
\end{enumerate}
\end{lem}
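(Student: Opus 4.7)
The overall plan is to mimic the classical comparison theorem for projective resolutions, replacing ``projectivity'' at different places by either projectivity of $P$, $P'$ or by the GF-exactness of the augmented proper resolution $G'^+$. The key input is that for any Gorenstein flat $G_i$, applying $\Hom_R(G_i,-)$ to $G'^+$ yields a homologically trivial complex; this is exactly what properness of $\gamma'\colon G'\to M'$ provides and is the substitute for ``$G_i$ is projective.''

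For part \eqref{item26}, I would construct $\phi\colon P\to G$ by the usual degree-wise induction: since each $P_i$ is projective and $G^+$ is exact, the map $P_0\to M$ lifts through the surjection $G_0\to M$, and then for $i\ge 1$ we lift through $\partial^G_i$ using that $\ker(\partial^G_{i-1})=\operatorname{im}(\partial^G_i)$. The existence and homotopy-uniqueness of $\phi'\colon P'\to G'$ is identical. No properness is required here because $P,P'$ are projective.

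For part \eqref{item27}, the construction of $\widetilde\mu$ is where properness of $G'$ is essential. By induction, suppose $\widetilde\mu_0,\dots,\widetilde\mu_{i-1}$ have been built so that the relevant squares commute. Then one obtains a map $G_i\to G'_{i-1}$ (resp.\ $G_0\to M'$ for $i=0$) that composes to zero further down; applying $\Hom_R(G_i,-)$ to the GF-exact complex $G'^+$ and using $G_i\in\GF(R)$ shows this map lifts to some $\widetilde\mu_i\colon G_i\to G'_i$. The same $\Hom_R(G_i,G'^+)$ exactness argument shows any two such lifts differ by a null-homotopy, yielding homotopy uniqueness. The map $\overline\mu\colon P\to P'$ is produced by the standard comparison theorem for projective resolutions; both $\widetilde\mu\,\phi$ and $\phi'\,\overline\mu$ then give lifts of $\mu\pi\colon P\to M'$ through $\gamma'$, so by the already-established homotopy uniqueness in part \eqref{item26} (applied with $P$ in the role of the projective source and $G'$ in the role of the proper Gorenstein flat target), they are homotopic; this is what makes the left square commute up to homotopy.

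Finally, for the homotopy-equivalence claim when $\mu=\operatorname{id}_M$ and $G$ is also proper, the construction in \eqref{item27} can be run in the reverse direction to produce $\widetilde{\operatorname{id}}\,{}'\colon G'\to G$ lifting $\operatorname{id}_M$. The composites $\widetilde{\operatorname{id}}\,{}'\circ\widetilde\mu$ and $\operatorname{id}_G$ both lift $\operatorname{id}_M$ through the proper resolution $\gamma$, so by homotopy uniqueness (now available because $G$ too is proper) they are homotopic; similarly on the other side. The analogous statement for $\overline\mu$ is the classical fact that lifts of $\operatorname{id}_M$ between two projective resolutions are homotopy equivalences. The main obstacle is keeping straight which uniqueness statement is being invoked at each stage --- properness is only needed on the target of a lift, not the source --- but once this bookkeeping is set up, every step reduces to the standard $\Hom(\cdot,\text{exact complex})$-is-exact comparison argument.
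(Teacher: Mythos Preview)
Your proposal is correct and follows exactly the standard comparison-theorem strategy that the paper has in mind: the paper omits the proof entirely, remarking only that it is nearly identical to \cite[(4.3) and (4.5)]{AvM} (see also \cite[(2.2)]{H2}), and your outline is precisely that argument---using projectivity of $P,P'$ for part~\eqref{item26} and GF-exactness of the proper target $G'^+$ for the lifts in part~\eqref{item27}. Your bookkeeping about when properness is needed (only on the target) is accurate and is exactly the point of the relative version.
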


\begin{lem} \label{lem03}
Let $0\to M\xra{\mu} M'\xra{\mu'} M''\to 0$ be a GF-exact 
sequence of $R$-modules.  
Fix proper Gorenstein flat resolutions 
$\gamma\colon G \to M$ and $\gamma''\colon G'' \to M''$,
and fix projective resolutions
$\pi\colon P\to M$ and $\pi'\colon P''\to M''$. 
There exists a commutative diagram of morphisms
$$\xymatrix{
0 \ar[r] & P\ar[d]_{\phi} \ar[r]^{\ol{\mu}} & P' \ar[d]_{\phi'}\ar[r]^{\ol{\mu}'} 
& P'' \ar[r]\ar[d]_{\phi''} & 0 \\
0 \ar[r] & G\ar[d]_{\gamma} \ar[r]^{\widetilde{\mu}} & G' \ar[r]^{\widetilde{\mu}'} \ar[d]_{\gamma'}
& G''  \ar[r]\ar[d]_{\gamma''} & 0 \\
0 \ar[r] & M \ar[r]^{\mu} & M' \ar[r]^{\mu'} 
& M'' \ar[r] & 0}$$ 
where the top and middle rows are degreewise split exact, $\gamma'$ is 
a proper Gorenstein flat resolution, $\pi'=\gamma'\phi'$ is a projective 
resolution, and $\pi=\gamma\phi$ and 
$\pi''=\gamma''\phi''$.  
\end{lem}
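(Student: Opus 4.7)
The plan is to carry out a horseshoe-style construction twice (once for the middle row of Gorenstein flat resolutions and once for the top row of projective resolutions) and then to build the comparison map $\phi'$ in the middle column. The GF-exactness hypothesis is precisely what makes the Gorenstein flat horseshoe go through; the remaining steps are essentially formal.

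For the middle row, I would set $G'_n := G_n\oplus G''_n$ degreewise, with $\widetilde\mu_n$ the canonical split injection and $\widetilde\mu'_n$ the canonical split projection. Applying GF-exactness of the bottom sequence to the Gorenstein flat module $G''_0$ yields a lift $\sigma_0\colon G''_0\to M'$ of $\gamma''_0$ along $\mu'$, and I would define $\gamma'_0(x,y):=\mu\gamma_0(x)+\sigma_0(y)$. The differentials would be built inductively as upper-triangular matrices
\[
\partial^{G'}_n=\begin{pmatrix}\partial^G_n & \alpha_n \\ 0 & \partial^{G''}_n\end{pmatrix}
\]
with connecting maps $\alpha_n\colon G''_n\to G_{n-1}$ obtained by lifting $\partial^{G''}_n$ along the cycle sequence at stage $n-1$. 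The key observation is that GF-exactness propagates through the cycle short exact sequences $0\to Z_{n-1}(G)\to Z_{n-1}(G')\to Z_{n-1}(G'')\to 0$ via a snake-lemma argument using properness of $\gamma$ and $\gamma''$, so the required lifts exist at every stage. By construction $\widetilde\mu$ and $\widetilde\mu'$ are chain maps, the middle row is degreewise split exact, and the snake lemma applied to the rows yields exactness of the augmented complex. Each $G'_n$ is Gorenstein flat as a direct sum of Gorenstein flats; to verify properness I would observe that for each $H\in\GF(R)$ the induced sequence
\[
0\to\Hom_R(H,G^{+})\to\Hom_R(H,(G')^{+})\to\Hom_R(H,(G'')^{+})\to 0
\]
is short exact as a sequence of complexes (the rows split in every degree) with outer terms homologically trivial, whence so is the middle.

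For the top row, the classical horseshoe lemma for projectives (which requires only exactness of the bottom SES) produces $P'_n:=P_n\oplus P''_n$ together with the degreewise split exact sequence $0\to P\to P'\to P''\to 0$ and an augmentation $\pi'\colon P'\to M'$ satisfying $\mu\pi=\pi'\overline\mu$ and $\mu'\pi'=\pi''\overline\mu'$. To construct $\phi'\colon P'\to G'$ compatible with $\phi$ and $\phi''$, I would set $\phi'|_{P_n}:=\widetilde\mu_n\circ\phi_n$ on the first summand and choose $\phi'|_{P''_n}$ inductively as a lift of $\phi''_n$ along the split surjection $\widetilde\mu'_n$, adjusted so that it is chain-compatible with $\partial^{G'}_n$; such adjustments are possible because $P''_n$ is projective and $(G')^{+}$ is exact. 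The identity $\pi'=\gamma'\phi'$ then follows by direct verification on each summand.

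The principal obstacle is the Gorenstein flat horseshoe step: producing the connecting maps $\alpha_n$ requires propagating GF-exactness through the cycle sequences at each inductive stage, and this is where properness of $\gamma$ and $\gamma''$ is used in an essential way. Once the middle row is in place, the projective horseshoe and the construction of $\phi'$, $\overline\mu$, and $\overline\mu'$ follow by standard arguments.
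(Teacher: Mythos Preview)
Your proposal is correct and carries out in detail the horseshoe-type construction that the paper itself omits, referring instead to~\cite[(4.3) and (4.5)]{AvM} and~\cite[(2.2)]{H2}. Your argument is precisely the adaptation of those proofs to the Gorenstein flat setting, with the GF-exactness hypothesis playing the role that properness and precover conditions play there.
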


The following tools of Christensen, Foxby and Frankild~\cite{Chr,F3}
are useful for tracking Gorenstein homological dimensions.

\begin{defn}
Let $M$ be an $R$-module.
The \emph{large restricted flat-dimension}
and \emph{large restricted injective-dimension} of $M$ are
\begin{align*}
\Td_R M &= \sup\{i\mid \text{$\Tor_i ^R(L,M) \neq 0$ for some $R$-module $L$
with $\fd_R L<\infty$} \}\\
\operatorname{Rid}_RM&=\operatorname{sup}\{i\mid\text{$\operatorname{Ext}^i_R(T,M)\neq 
0$ for some $R$-module $X$ with $\mathrm{pd}_RX<\infty$}\}.
\end{align*}
\end{defn}

\section{Relative Homological Algebra with Gorenstein flat Modules} \label{sec02}

In this section we investigate relative homological algebra with
respect to the class of Gorenstein flat modules in the sense of
Eilenberg and Moore~\cite{EM} and Enochs and Jenda~\cite{EJ3}.

\begin{defn}\label{defn130702a}
Let $M,M',N,N'$ be $R$-modules such that $M$ and  $M'$ admit
proper Gorenstein flat resolutions
$G\to   M$ and $G'\to   M'$. 
For each
$n\in\Z$
and every $R$-module $N$, we consider the $n$-th \emph{relative cohomology modules} 
\begin{align*}
 \Ext_{\GF}^{n}(M,N) &= \HH^n(\Hom_R(G,N)) =\HH_{-n}(\Hom_R(G,N)) \\
 \Tor_{n}^{\GF}(M,N) &= \HH_n(G\otimes_{R}N). 
\end{align*}
For each $R$-linear map 
$\mu\colon M\to M'$, let $\widetilde\mu\colon G\to G'$ be a morphism as 
in Lemma~\ref{lem01}.  Applying 
the functors $\Hom_R(-, N)$ and
$-\otimes_R N$ to $\widetilde\mu$ and taking homology yield
homomorphisms
\begin{align*}
\Ext_{\GF}^n(\mu,N)&\colon \Ext_{\GF}^n(M',N) \to \Ext_{\GF}^{n} (M,N) \\
\Tor_n^{\GF}(\mu,N)&\colon \Tor_n^{\GF}(M,N)\to \Tor_n^{\GF}(M',N).
\end{align*}
Furthermore, an $R$-linear map $\tau\colon N\to N'$ gives rise to more 
homomorphisms
\begin{align*}
\Ext_{\GF}^n(M,\tau)&\colon \Ext_{\GF}^n(M,N) \to \Ext_{\GF}^{n} (M,N') \\
\Tor_n^{\GF}(M,\tau)&\colon \Tor_n^{\GF}(M,N)\to \Tor_n^{\GF}(M,N').
\end{align*}
\end{defn}

\begin{obs}\label{rmk130702a}
Continue with the notation of Definition~\ref{defn130702a}.
By Lemma~\ref{lem01} the modules $\Ext_{\GF}^{n}(M,N)$ and
$\Tor_{n}^{\GF}(M,N)$
are independent of the choice of
resolution and  that the homomorphisms $\Ext_{\GF}^{n}(\mu,N)$ and
$\Tor_{n}^{\GF}(\mu,N)$ are independent 
of the resolutions and lifts chosen.
It is straightforward to show that the maps $\Ext_{\GF}^{n}(M,\tau)$ and
$\Tor_{n}^{\GF}(M,\tau)$
are 
well-defined as well.
Furthermore, it is not difficult to check that these constructions are functorial and 
compatible, in the sense
that the following diagrams commute
$$\xymatrix@C=15mm{\Ext_{\GF}^{n} (M',N) \ar[d]_{\Ext_{\GF}^n(M',\tau)}\ar[r]^{\Ext_{\GF}^n(\mu,N)} & \Ext_{\GF}^{n} (M,N) \ar[d]^{\Ext_{\GF}^n(M,\tau)}\\
\Ext_{\GF}^{n} (M',N') \ar[r]^{\Ext_{\GF}^n(\mu,N')} & \Ext_{\GF}^{n} (M,N') 
}$$
$$\xymatrix@C=15mm{
\Tor_n^{\GF}(M,N)\ar[d]_{\Tor_n^{\GF}(M,\tau)} \ar[r]^{\Tor_n^{\GF}(\mu,N)} & \Tor_n^{\GF}(M',N)\ar[d]^{\Tor_n^{\GF}(M',\tau)} \\
\Tor_n^{\GF}(M,N') \ar[r]^{\Tor_n^{\GF}(\mu,N')} & \Tor_n^{\GF}(M',N'). 
}$$
\end{obs}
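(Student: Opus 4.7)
The plan is to reduce every assertion in the remark to two facts: the uniqueness-up-to-homotopy part of Lemma~\ref{lem01}, and the standard observation that an additive functor sends homotopic chain maps to homotopic chain maps, and therefore induces equal maps on (co)homology.

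First I would establish independence of the proper Gorenstein flat resolution for the modules $\Ext_{\GF}^n(M,N)$ and $\Tor_n^{\GF}(M,N)$. Given two such resolutions $\gamma\colon G\to M$ and $\gamma'\colon G'\to M$, Lemma~\ref{lem01}(b) applied with $\mu=\mathrm{id}_M$ produces chain maps $\widetilde{\mathrm{id}}\colon G\to G'$ and $\widetilde{\mathrm{id}}'\colon G'\to G$; since both compositions lift $\mathrm{id}_M$, they are homotopic to the identities, so the two maps are mutually inverse homotopy equivalences. Applying $\Hom_R(-,N)$ or $-\otimes_R N$ yields homotopy equivalences of complexes, and taking $\HH^{-n}$ or $\HH_n$ produces canonical isomorphisms between the two candidate modules.

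The same reasoning handles the induced maps in the contravariant slot. Any two lifts $\widetilde\mu,\widetilde\mu'\colon G\to G'$ of $\mu\colon M\to M'$ are homotopic by Lemma~\ref{lem01}(b); applying $\Hom_R(-,N)$ or $-\otimes_R N$ preserves this homotopy, and homotopic maps induce equal maps on (co)homology, so $\Ext_{\GF}^n(\mu,N)$ and $\Tor_n^{\GF}(\mu,N)$ are independent of the chosen lift. A straightforward variant shows independence of the resolutions of $M$ and $M'$, up to the canonical isomorphisms of the previous step. For the covariant slot, the maps $\Hom_R(G,\tau)$ and $G\otimes_R\tau$ involve no choices, so $\Ext_{\GF}^n(M,\tau)$ and $\Tor_n^{\GF}(M,\tau)$ are well-defined immediately, and functoriality in $\tau$ is inherited from $\Hom_R(G,-)$ and $G\otimes_R-$. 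Functoriality in the other slot is then automatic, since $\widetilde{\mu'}\widetilde\mu$ lifts $\mu'\mu$ and $\mathrm{id}_G$ lifts $\mathrm{id}_M$.

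Commutativity of the two displayed diagrams is a formal consequence of the bifunctoriality of $\Hom_R(-,-)$ and $-\otimes_R-$: at the chain level one has
\[
\Hom_R(G,\tau)\circ\Hom_R(\widetilde\mu,N)=\Hom_R(\widetilde\mu,\tau)=\Hom_R(\widetilde\mu,N')\circ\Hom_R(G',\tau),
\]
with analogous equalities for $\otimes_R$, and passing to (co)homology preserves them. The main obstacle throughout is clerical rather than conceptual: one must keep straight the variance in Ext, the distinction between two resolutions on each side, and the naturality of the comparison isomorphisms in $N$. No homological input beyond Lemma~\ref{lem01} is needed.
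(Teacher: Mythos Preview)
Your proposal is correct and matches the paper's approach exactly: the paper treats this remark as self-evident, simply invoking Lemma~\ref{lem01} for independence of resolution and lift, and declaring the rest ``straightforward'' and ``not difficult to check.'' You have spelled out precisely the routine verifications the paper leaves implicit---homotopy equivalence of resolutions via lifting $\mathrm{id}_M$, preservation of homotopies by additive functors, and chain-level bifunctoriality of $\Hom$ and $\otimes$---so there is nothing to add or correct.
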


\begin{prop} \label{thm12}
Let $n\geq 0$. 
\begin{enumerate}[\rm(a)]
\item \label{item16}
The constructions described above give 
well-defined additive functors
\begin{align*}
\Ext^n_{\GF}(-,-)&\colon \M(R)^{\op}\times\M(R)\to\M(R) \\
\Tor_n^{\GF}(-,-)&\colon \M(R)\times\M(R)\to\M(R). 
\end{align*}
\item \label{item17}
For all $R$-modules $M$ and $N$, there are natural isomorphisms
$\Ext^0_{\GF}(M,N)\cong\Hom_R(M,N)$ and $\Tor_0^{\GF}(M,N)\cong M\otimes_R N$.
\end{enumerate}
\end{prop}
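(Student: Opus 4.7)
The bulk of part~(a) is already recorded in Remark~\ref{rmk130702a}: the choice of proper Gorenstein flat resolution $G\to M$ and lifts $\widetilde\mu$ of a map $\mu\colon M\to M'$ affect $\Hom_R(G,N)$ and $G\otimes_R N$ only up to chain homotopy, by Lemma~\ref{lem01}. Since homotopic maps induce the same map on homology, both $\Ext_{\GF}^n(M,N)$ and $\Tor_n^{\GF}(M,N)$ and the induced maps in either slot are well-defined. Moreover, the diagrams in Remark~\ref{rmk130702a} already give compatibility between the variances in the two slots, so all that remains to prove the functorial properties is to verify preservation of identities, compositions, and sums.

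To this end, fix $M$, $M'$, $M''$ admitting proper Gorenstein flat resolutions $G\to M$, $G'\to M'$, $G''\to M''$. For $\mu=\id_M$ we may take $\widetilde\mu=\id_G$, which induces the identity on $\Hom_R(G,N)$ and $G\otimes_R N$ and hence on homology. Given $\mu\colon M\to M'$ and $\nu\colon M'\to M''$ with lifts $\widetilde\mu\colon G\to G'$ and $\widetilde\nu\colon G'\to G''$, the composite $\widetilde\nu\widetilde\mu$ is a lift of $\nu\mu$; uniqueness up to homotopy in Lemma~\ref{lem01}\eqref{item27} then implies functoriality in the first slot. Functoriality in the second slot is immediate, since lifts need not be modified there. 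For additivity, if $\widetilde\mu$ and $\widetilde{\mu'}$ lift $\mu,\mu'\colon M\to M'$, then $\widetilde\mu+\widetilde{\mu'}$ lifts $\mu+\mu'$, and likewise for a sum of maps $\tau,\tau'\colon N\to N'$; applying the additive functors $\Hom_R(-,N)$ and $-\otimes_R N$ and passing to homology preserves these sums.

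For part~(b), let $G\to M$ be a proper Gorenstein flat resolution. Because $R\in\GF(R)$, the augmented complex
\[
\cdots\to G_1\xra{\partial^G_1} G_0\to M\to 0
\]
is exact; in particular $M=\coker(\partial^G_1)$. Right exactness of $-\otimes_R N$ then yields
\[
\Tor_0^{\GF}(M,N)=\HH_0(G\otimes_R N)=\coker(\partial^G_1\otimes_R N)\cong M\otimes_R N,
\]
while left exactness of $\Hom_R(-,N)$ yields
\[
\Ext^0_{\GF}(M,N)=\HH^0(\Hom_R(G,N))=\ker(\Hom_R(\partial^G_1,N))\cong\Hom_R(M,N).
\]
Naturality in $M$ comes from Lemma~\ref{lem01}\eqref{item27}: a lift $\widetilde\mu\colon G\to G'$ of $\mu\colon M\to M'$ has $\widetilde\mu_0$ compatible with $\mu$ through the augmentations, so the induced maps on cokernels and kernels coincide, under the displayed isomorphisms, with $\mu\otimes_R N$ and $\Hom_R(\mu,N)$; naturality in $N$ is clear from the definitions.

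There is no real obstacle here beyond the bookkeeping of keeping track of ``well-defined up to homotopy''; the key input is uniqueness of lifts in Lemma~\ref{lem01}, together with the exactness of $G^+$ that comes for free from $R\in\GF(R)$.
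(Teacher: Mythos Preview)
Your proof is correct and follows essentially the same approach as the paper: part~(a) is derived from Remark~\ref{rmk130702a} (you simply spell out the verification of identities, compositions, and sums that the paper leaves implicit), and part~(b) is obtained from the left exactness of $\Hom_R(-,N)$ and the right exactness of $-\otimes_R N$, exactly as the paper indicates. The only minor remark is that exactness of the augmented complex $G^+$ is already part of the definition of a Gorenstein flat resolution, so invoking $R\in\GF(R)$ there is unnecessary (though not incorrect).
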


\begin{proof}
Part~\eqref{item16} follows from Remark~\ref{rmk130702a}, and part~\eqref{item17} is a consequence of the left and right 
exactness of Hom and tensor product, respectively.
\end{proof}

Our next results describe exactness properties for these functors as in~\cite{AvM}.

\begin{prop} \label{thm01}
Given an $R$-module $M$ and a GF-exact sequence of modules
$0\to N_2\xra{\alpha_2}   N_1\xra{\alpha_1} N_0\to 0$ 
there is a long exact sequence 
$$
\cdots
\Ext_{\GF}^{n}(M,N_2)
\to
\Ext_{\GF}^{n}(M,N_1)
\to
\Ext_{\GF}^{n}(M,N_0)
\xra{\vartheta^{n}_{MN}}
\Ext_{\GF}^{n+1}(M,N_2)
\cdots
$$
where the unmarked maps are $\Ext_{\GF}^{n}(M,\alpha_2)$ and 
$\Ext_{\GF}^{n}(M,\alpha_1)$, respectively.
This sequence is natural in both arguments, in the following sense.
\begin{enumerate}[\rm(a)]
\item \label{item18}
Given an $R$-module homomorphism 
$\mu\colon M\to M'$, the next diagram commutes
$$\xymatrix@C=4.5mm{\cdots  \Ext_{\GF}^{n}(M,N_2)\ar[d] \ar[r] & \Ext_{\GF}^{n}(M,N_1)\ar[d] 
\ar[r] & \Ext_{\GF}^{n}(M,N_0) \ar[d]\ar[r] & 
\Ext_{\GF}^{n+1}(M,N_2)\ar[d]  \cdots \\
\cdots  \Ext_{\GF}^{n}(M',N_2)  \ar[r] & \Ext_{\GF}^{n}(M',N_1) 
\ar[r] & \Ext_{\GF}^{n}(M',N_0) \ar[r] & 
\Ext_{\GF}^{n+1}(M',N_2)  \cdots}$$
where each vertical map is the appropriate 
$\Ext^i_{\GF}(\mu,N_j)$.
\item \label{item19}
Given a commutative diagram with both rows GF-exact
$$\xymatrix{
0 \ar[r] & N_2 \ar[r]\ar[d]_{\beta_2} & N_1 \ar[r]\ar[d]_{\beta_1} & N_0 \ar[r]\ar[d]_{\beta_0} & 0 \\
0 \ar[r] & N_2' \ar[r] & N_1' \ar[r] & N_0' \ar[r] & 0}$$
the following diagram 
commutes
$$\xymatrix@C=6mm{
\cdots  \Ext_{\GF}^{n}(M,N_2) \ar[r] \ar[d]& \Ext_{\GF}^{n}(M,N_1) \ar[r]\ar[d] & \Ext_{\GF}^{n}(M,N_0) \ar[r]\ar[d] &
\Ext_{\GF}^{n+1}(M,N_2) \cdots \\
\cdots  \Ext_{\GF}^{n}(M,N_2') \ar[r] & \Ext_{\GF}^{n}(M,N_1') \ar[r] & \Ext_{\GF}^{n}(M,N_0') \ar[r]&
\Ext_{\GF}^{n+1}(M,N_2')  \cdots}$$
where each vertical map is the appropriate 
$\Ext^i_{\GF}(M,\beta_j)$.
\end{enumerate}
\end{prop}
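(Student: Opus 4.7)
The plan is to reduce everything to the standard long exact sequence in cohomology coming from a short exact sequence of complexes, and then exploit the uniqueness-up-to-homotopy statements in Lemma~\ref{lem01} to get naturality.

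First, fix a proper Gorenstein flat resolution $\gamma\colon G\to M$. Since the given sequence $0\to N_2\to N_1\to N_0\to 0$ is GF-exact and each $G_i\in\GF(R)$, applying $\Hom_R(G_i,-)$ to the sequence yields a short exact sequence of abelian groups for every $i$. Assembling these degreewise produces a short exact sequence of complexes
$$0\to \Hom_R(G,N_2)\to\Hom_R(G,N_1)\to\Hom_R(G,N_0)\to 0.$$
The associated long exact sequence in cohomology is, by definition of $\Ext^n_{\GF}(M,-)$, the desired long exact sequence, with connecting maps $\vartheta^n_{MN}$ coming from the snake lemma.

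For naturality in $M$ (part (a)), I would invoke Lemma~\ref{lem01}(b): given $\mu\colon M\to M'$ and a proper Gorenstein flat resolution $\gamma'\colon G'\to M'$, there is a chain map $\widetilde\mu\colon G\to G'$ lifting $\mu$, unique up to homotopy. Applying $\Hom_R(-,N_j)$ to $\widetilde\mu$ yields a morphism of short exact sequences of complexes
$$\xymatrix@C=6mm{
0\ar[r] & \Hom_R(G',N_2)\ar[r]\ar[d] & \Hom_R(G',N_1)\ar[r]\ar[d] & \Hom_R(G',N_0)\ar[r]\ar[d] & 0 \\
0\ar[r] & \Hom_R(G,N_2)\ar[r] & \Hom_R(G,N_1)\ar[r] & \Hom_R(G,N_0)\ar[r] & 0.
}$$
Functoriality of the long exact sequence in cohomology with respect to morphisms of short exact sequences of complexes then gives the commutative ladder, with vertical maps $\Ext^n_{\GF}(\mu,N_j)$ in the notation of Definition~\ref{defn130702a}.

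For naturality in $N$ (part (b)), the argument is even simpler: the morphism of short exact sequences with vertical maps $\beta_j$ immediately induces a morphism of short exact sequences of complexes after applying $\Hom_R(G,-)$, and once again functoriality of the long exact sequence yields the commutative ladder, with vertical maps $\Ext^n_{\GF}(M,\beta_j)$. The main point to be careful about is verifying that the squares involving the connecting homomorphisms $\vartheta^n_{MN}$ commute; this is a standard consequence of the snake lemma applied to the evident morphism of short exact sequences of complexes, and it is the only step that requires more than pure bookkeeping, though it contains no real difficulty once the short exact sequences of complexes above are in hand.
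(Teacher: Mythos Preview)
Your proposal is correct and follows essentially the same approach as the paper: both obtain the long exact sequence from the short exact sequence of complexes $0\to\Hom_R(G,N_2)\to\Hom_R(G,N_1)\to\Hom_R(G,N_0)\to 0$, and both handle naturality via the lift $\widetilde\mu$ from Lemma~\ref{lem01} for part~(a) and via the induced morphism of short exact sequences of complexes for part~(b). Your write-up is in fact slightly more careful than the paper's in orienting the diagram for part~(a) correctly (with $\Hom_R(G',N_j)$ on top, since $\Hom_R(\widetilde\mu,N_j)$ goes from $\Hom_R(G',N_j)$ to $\Hom_R(G,N_j)$).
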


\begin{proof}
Let $G\to M$ be a proper Gorenstein flat resolution.  
Since the given sequence is GF-exact the following sequence of 
morphisms is exact.
$$
0\to   \Hom(G,N_2)
 \xra{\Hom_R(G,\alpha_2)} \Hom_R(G,N_1)
 \xra{\Hom_R(G,\alpha_1)} \Hom_R(G,N_0)
 \to 0
$$
The associated long exact sequence is the desired one.

\eqref{item18}
Let $G'\to M'$ be a proper Gorenstein flat resolution and $\widetilde{\mu}\colon 
G\to G'$ a morphism as in Lemma~\ref{lem01}.  The desired diagram 
comes from taking the long exact sequences of the rows of the 
following commutative diagram 
$$\xymatrix@C=12mm{
0 \ar[r] & \Hom(G,N_2) \ar[r]^{\Hom(G,\alpha_2)} \ar[d]& \Hom(G,N_1)
\ar[d]\ar[r]^{\Hom(G,\alpha_1)} & \Hom(G,N_0) \ar[r] & 0 \\
0 \ar[r] & \Hom(G',N_2) \ar[r]^{\Hom(G',\alpha_2)} & \Hom(G',N_1)
\ar[r]^{\Hom(G',\alpha_1)} & \Hom(G',N_0) \ar[r] & 0}$$ 
where each vertical map is  the appropriate $\Hom(\widetilde{\mu},N_j)$.

\eqref{item19}
The desired diagram comes from the following one
$$\xymatrix@C=13mm{
0 \ar[r] & \Hom(G,N_2) \ar[d]\ar[r]^{\Hom(G,\alpha_2)} & \Hom(G,N_1)
\ar[d]\ar[r]^{\Hom(G,\alpha_1)} & \Hom(G,N_0) \ar[r]\ar[d] & 0 \\
0 \ar[r] & \Hom(G,N_2') \ar[r]^{\Hom(G,\alpha_2')} & \Hom(G,N_1')
\ar[r]^{\Hom(G,\alpha_1')} & \Hom(G,N_0') \ar[r] & 0 
}$$
where each vertical map is  the appropriate $\Hom(M,\beta_j)$.
\end{proof}

\begin{prop} \label{thm13}
Fix an $R$-module $M$ and a short exact sequence of $R$-modules
$0\to N_2\xra{\alpha_2}   N_1\xra{\alpha_1} N_0\to 0$ 
such that, for each $G\in\GF(R)$, the tensored sequence
\[ 0\to G\otimes_R N_2\to G\otimes_R N_1\to G\otimes_R N_0\to 0 \]
is exact.
(For instance, this holds when the sequence is split exact or when 
$N_0$ has finite injective dimension.)
There is a long exact sequence 
$$
\cdots
\Tor^{\GF}_{n}(M,N_2)
\to
\Tor^{\GF}_{n}(M,N_1)
\to
\Tor^{\GF}_{n}(M,N_0)
\xra{\varpi^{n}_{MN}}
\Tor^{\GF}_{n+1}(M,N_2)
\cdots
$$
where the unmarked maps are 
$\Tor^{\GF}_{n}(M,\alpha_2)$ and 
$\Tor^{\GF}_{n}(M,\alpha_1)$, respectively.
This sequence is natural in both arguments as in 
Proposition~\ref{thm01}.
\end{prop}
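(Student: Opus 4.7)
The plan is to mimic the proof of Proposition~\ref{thm01}, replacing $\Hom_R(G,-)$ with $G\otimes_R-$ throughout. Fix a proper Gorenstein flat resolution $\gamma\colon G\to M$. Since each module $G_n$ is Gorenstein flat, the standing hypothesis on the sequence $0\to N_2\to N_1\to N_0\to 0$ says precisely that each tensored row
\[
0\to G_n\otimes_R N_2\xra{G_n\otimes\alpha_2} G_n\otimes_R N_1\xra{G_n\otimes\alpha_1} G_n\otimes_R N_0\to 0
\]
is exact. Assembling these degreewise gives a short exact sequence of $R$-complexes
\[
0\to G\otimes_R N_2\to G\otimes_R N_1\to G\otimes_R N_0\to 0,
\]
and the long exact sequence in homology is the desired one, by Definition~\ref{defn130702a}.

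For the parenthetical, I would note that when the sequence is split exact the assertion is obvious, and when $\operatorname{id}_R N_0<\infty$ one has $\Tor^R_i(G,N_0)=0$ for all $i\geq 1$ and all $G\in\GF(R)$ (a standard vanishing fact for Gorenstein flat modules against modules of finite injective dimension), so the usual Tor long exact sequence collapses to give exactness of the tensored short sequence.

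For naturality, I would follow exactly the pattern of Proposition~\ref{thm01}. In part~(a), given $\mu\colon M\to M'$ with proper Gorenstein flat resolution $\gamma'\colon G'\to M'$ and a lift $\widetilde\mu\colon G\to G'$ from Lemma~\ref{lem01}\eqref{item27}, apply $-\otimes_R N_j$ to $\widetilde\mu$ to obtain a morphism between the two short exact sequences of complexes above, and take the induced map on long exact sequences. In part~(b), given a morphism $(\beta_2,\beta_1,\beta_0)$ between two such GF-tensor-exact short sequences, apply $G\otimes_R-$ pointwise to obtain a morphism of short exact sequences of complexes and pass to the long exact sequence in homology.

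I do not expect any serious obstacle: the only non-formal point is the justification of the parenthetical remark, and this reduces to the Tor-vanishing of Gorenstein flat modules against modules of finite injective dimension. The bulk of the argument is a verbatim translation of the Hom proof, replacing left-exactness of $\Hom_R(G,-)$ with right-exactness of $G\otimes_R-$ and using the hypothesis to upgrade right-exactness to exactness on the left.
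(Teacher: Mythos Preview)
Your proposal is correct and matches the paper's own argument essentially verbatim: the paper justifies the parenthetical via $\Tor^R_1(G,N_0)=0$ for $G\in\GF(R)$ when $\id_R N_0<\infty$ (citing~\cite[(3.13)]{H}), and then simply states that the remainder is similar to the proof of Proposition~\ref{thm01}. You have merely spelled out that similarity in full.
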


\begin{proof}
When $N_0$ has finite injective dimension, one has $\Tor^R_1(G,N_0)=0$ 
for each $G\in\GF(R)$ by~\cite[(3.13)]{H}, so 
the tensored sequence is exact in this case.  The remainder of the 
proof is similar to that of Proposition~\ref{thm01}.
\end{proof}

\begin{prop} \label{thm02}
Let $0 \to   M\xra{\mu}
M'\xra{\mu'} M''\to 0$ be a GF-exact sequence
of $R$-modules and  $N$ an
$R$-module.  There are exact sequences
\begin{align*}
\cdots                
\Ext_{\GF}^n (M'',N)   \rightarrow
\Ext_{\GF}^{n}(M',N)   \rightarrow
\Ext_{\GF}^n(M,N)      \xra{\varrho^n_{MN}}
\Ext_{\GF}^{n+1}(M'',N) 
  \cdots
\\ 
\cdots
\Tor_{n+1}^{\GF} (M'',N)
\xra{\varsigma_{n+1}^{MN}}  \Tor_{n}^{\GF}(M,N)
\rightarrow  \Tor_{n}^{\GF}(M',N)
\rightarrow  \Tor_{n}^{\GF}(M'',N)
\cdots
\end{align*}
with unmarked maps 
$\Ext_{\GF}^{n}(\mu',N)$,
$\Ext_{\GF}^{n}(\mu,N)$,
$\Tor^{\GF}_{n}(\mu,N)$, and 
$\Tor^{\GF}_{n}(\mu',N)$, respectively.
These sequences are natural in both arguments as in 
Proposition~\ref{thm01}.
\end{prop}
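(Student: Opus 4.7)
The plan is to mimic the proof of Proposition~\ref{thm01}, but with the roles of the two arguments swapped: there, we got a short exact sequence of Hom/tensor complexes by using GF-exactness of the sequence in the second slot against a single proper resolution. Here, we need a short exact sequence in the first slot, which requires producing compatible proper Gorenstein flat resolutions for all three modules $M, M', M''$ that fit into a short exact sequence of complexes. This is exactly what Lemma~\ref{lem03} supplies.

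First I would fix proper Gorenstein flat resolutions $\gamma\colon G\to M$ and $\gamma''\colon G''\to M''$, and invoke Lemma~\ref{lem03} to obtain a proper Gorenstein flat resolution $\gamma'\colon G'\to M'$ sitting in a \emph{degreewise split exact} sequence of complexes
\[
0\to G\xra{\widetilde\mu} G'\xra{\widetilde\mu'} G''\to 0.
\]
Because this sequence is split in each degree, applying $\Hom_R(-,N)$ and $-\otimes_R N$ preserves exactness, yielding short exact sequences of complexes
\begin{align*}
0&\to \Hom_R(G'',N)\to \Hom_R(G',N)\to \Hom_R(G,N)\to 0,\\
0&\to G\otimes_R N\to G'\otimes_R N\to G''\otimes_R N\to 0.
\end{align*}
The associated long exact sequences in cohomology/homology produce the desired sequences of Proposition~\ref{thm02}, with the connecting maps $\varrho^n_{MN}$ and $\varsigma^{MN}_{n+1}$ being the standard snake-lemma connecting homomorphisms. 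The unmarked maps are, by construction, the $\Ext^n_{\GF}(\mu^{(\prime)},N)$ and $\Tor^{\GF}_n(\mu^{(\prime)},N)$ induced by the liftings $\widetilde\mu, \widetilde\mu'$ (here invoking Remark~\ref{rmk130702a} to ensure independence from the particular liftings chosen).

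For naturality in $N$, one simply applies $\Hom_R(G'_\bullet,-)$ or $G'_\bullet\otimes_R-$ to a homomorphism $\tau\colon N\to N'$ and chases the resulting morphism of short exact sequences of complexes. For naturality in $M$-style arguments (i.e., with respect to morphisms between two given GF-exact sequences $0\to M\to M'\to M''\to 0$), one builds the analogous diagram for the target sequence via Lemma~\ref{lem03}, lifts the three vertical maps on modules to morphisms between the corresponding proper Gorenstein flat resolutions using Lemma~\ref{lem01}, and verifies that the resulting square of short exact sequences of complexes commutes up to homotopy --- which is enough to obtain commutativity after passing to (co)homology.

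The main obstacle is the verification that the two rows of split-exact complexes actually fit together naturally: Lemma~\ref{lem03} produces the middle resolution $G'$ in a way that depends on some choices, so the connecting homomorphisms are a priori defined in terms of these choices, and one must show they do not depend on them. This is handled exactly as in~\cite[\S4]{AvM}: two different applications of Lemma~\ref{lem03} produce comparison morphisms by Lemma~\ref{lem01}\eqref{item27}, unique up to homotopy, and passage to (co)homology kills the homotopy, yielding canonical identifications. Once this bookkeeping is in place, everything else is a routine application of the snake lemma and the naturality statements already established in Proposition~\ref{thm01}.
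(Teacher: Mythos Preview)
Your proposal is correct and follows essentially the same approach as the paper: invoke Lemma~\ref{lem03} to build a degreewise split exact sequence of proper Gorenstein flat resolutions, apply $\Hom_R(-,N)$ and $-\otimes_R N$, and take the associated long exact sequences. The paper's proof is terser, omitting your discussion of independence from choices and naturality, but the argument is the same.
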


\begin{proof}
Fix proper Gorenstein flat resolutions $\gamma\colon G\to M$ and 
$\gamma''\colon G''\to M''$.  Lemma~\ref{lem03} yeilds a commutative 
diagram
$$\xymatrix{
0 \ar[r] & G\ar[d]_{\gamma} \ar[r]^{\widetilde{\mu}} & G' \ar[r]^{\widetilde{\mu}'} \ar[d]_{\gamma'}
& G''  \ar[r]\ar[d]_{\gamma''} & 0 \\
0 \ar[r] & M \ar[r]^{\mu} & M' \ar[r]^{\mu'} 
& M'' \ar[r] & 0}$$ 
where $\gamma'$ is a proper Gorenstein flat resolution and the top row is 
degreewise split exact.  Applying $\Hom_R(-,N)$ and $- \otimes_R N$ to 
the top row and taking (co)homology
yields the desired sequences.
\end{proof}

The next theorem is one of the main results of this section. It shows that the vanishing of $\Ext^n_{\GF}(M,-)$ measures the Gorenstein flat dimension 
of $M$.

\begin{thm} \label{thm14}
For an $R$-module $M$ and an integer
$g\geq 0$, the following 
conditions are equivalent:
\begin{enumerate}[\rm(i)]
\item \label{item01}
$\Gfd_R M\leqslant g$;
\item \label{item02}
$\Ext_{\GF}^n (M,-) = 0$ for each $n>g$;
\item \label{item20}
$\Ext_{\GF}^{g+1}(M,-) = 0$;
\item \label{item21}
For each Gorenstein flat resolution
$G\to  M$ and each integer $n\geq g$ the cokernel
$\Omega^nG$ is Gorenstein flat;
\item \label{item22}
M admits a proper Gorenstein flat resolution
$G\to  M$ such that the cokernel
$\Omega^gG$ is Gorenstein flat.
\end{enumerate}
\end{thm}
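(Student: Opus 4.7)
The plan is to establish two cycles of implications, namely (v)$\Rightarrow$(i)$\Rightarrow$(iv)$\Rightarrow$(v) and (v)$\Rightarrow$(ii)$\Rightarrow$(iii)$\Rightarrow$(v), so that everything reduces to condition~(v). Most of these are either direct truncations or appeals to standard properties of Gorenstein flat modules from~\cite{H}; the substantive work lies in (iii)$\Rightarrow$(v).

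For the easy pieces: (v)$\Rightarrow$(i) follows by truncating the proper resolution at position $g$ to produce a bounded Gorenstein flat resolution of length at most $g$. For (i)$\Rightarrow$(iv) I would invoke Holm's characterization from~\cite{H}: $\Gfd_R M \leq g$ is equivalent to $\Tor^R_i(L,M)=0$ for all $i>g$ and all $L$ of finite injective dimension. Combined with the fact that Gorenstein flat modules are $\Tor$-acyclic against such $L$, dimension shifting along any Gorenstein flat resolution then yields $\Tor^R_i(L,\Omega^n G)=0$ for all $i\geq 1$ and $n\geq g$, so the same characterization forces $\Omega^n G\in\GF(R)$. The step (iv)$\Rightarrow$(v) is immediate from Remark~\ref{rmk01}. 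For (v)$\Rightarrow$(ii) I truncate the proper resolution to $0\to\Omega^g G\to G_{g-1}\to\cdots\to G_0\to M\to 0$ and check that the truncated augmented complex remains GF-exact---the splice short exact sequences $0\to\Omega^{n+1}G\to G_n\to\Omega^n G\to 0$ for $n<g$ are unchanged, so $\Hom_R(G',-)$-exactness is inherited from the original. Computing $\Ext_{\GF}^n(M,-)$ with this bounded proper resolution gives zero above degree $g$. The implication (ii)$\Rightarrow$(iii) is trivial.

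The crux is (iii)$\Rightarrow$(v). Fix any proper Gorenstein flat resolution $G\to M$ (Remark~\ref{rmk01}) and set $K=\Omega^g G$. The splicing argument above shows that the truncation $\cdots\to G_{g+1}\to G_g\to K\to 0$ is a proper Gorenstein flat resolution of $K$, yielding the dimension-shift isomorphism $\Ext_{\GF}^1(K,N)\cong\Ext_{\GF}^{g+1}(M,N)=0$ for every $R$-module $N$. Now take a Gorenstein flat cover $\phi\colon H\to K$ (Remark~\ref{rmk01}), which is surjective since $R\in\GF(R)$, and set $L=\Ker\phi$. The short exact sequence $0\to L\to H\to K\to 0$ is GF-exact because the precover property makes $\Hom_R(G',\phi)$ surjective for every $G'\in\GF(R)$. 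Applying Proposition~\ref{thm02} with $N=L$ produces an exact sequence ending in
\[
\Hom_R(H,L)\to\Hom_R(L,L)\to\Ext_{\GF}^1(K,L)=0,
\]
so $\mathrm{id}_L$ lifts to a retraction $H\to L$, the sequence splits, and $K$ is a direct summand of $H\in\GF(R)$. Closure of $\GF(R)$ under direct summands~\cite{H} then places $K$ in $\GF(R)$. I expect the principal obstacle to be justifying the dimension shift itself: demonstrating that the truncation of a proper resolution remains proper at the intermediate cokernel requires an explicit verification of $\Hom_R(G',-)$-exactness through each splice, which is automatic in the classical projective setting but must be traced carefully in this relative context.
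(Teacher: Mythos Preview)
Your argument is correct, and the overall reduction to (iii)$\Rightarrow$(v) via the dimension shift $\Ext_{\GF}^{1}(\Omega^gG,-)\cong\Ext_{\GF}^{g+1}(M,-)$ matches the paper. The one genuine difference is how the relevant sequence is split. The paper does not introduce a fresh GF-cover of $K=\Omega^gG$; instead it observes that the syzygy sequence
\[
0\to\Omega^{g+1}G\to G_{g+1}\to\Omega^gG\to 0
\]
already present in the proper resolution is GF-exact, and applies the long exact sequence in the \emph{second} variable (Proposition~\ref{thm01}) with first argument $\Omega^gG$. The vanishing $\Ext_{\GF}^{1}(\Omega^gG,\Omega^{g+1}G)=0$ then lifts $\operatorname{id}_{\Omega^gG}$ through $G_{g+1}$, splitting the syzygy sequence itself and exhibiting $\Omega^gG$ as a summand of $G_{g+1}\in\GF(R)$. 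Your route---building a new cover $0\to L\to H\to K\to 0$ and invoking the long exact sequence in the \emph{first} variable (Proposition~\ref{thm02})---is equally valid but one step longer, since it constructs a second GF-exact sequence when the proper resolution already supplies one for free. The ``principal obstacle'' you flag (propriety of the truncation) is handled identically and is indeed just the routine splice check you describe.
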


\begin{proof}
The implications 
\eqref{item01}$\implies$\eqref{item02}$\implies$\eqref{item20} 
are straightforward.  The 
implications 
\eqref{item22}$\implies$\eqref{item21}$\implies$\eqref{item01} are 
standard results, using the fact $M$ admits a proper Gorenstein flat 
resolution.  Thus, it remains to verify
\eqref{item20}$\implies$\eqref{item22}.

Assume that $\Ext_{\GF}^{g+1}(M,-) = 0$ and fix a
proper Gorenstein flat resolution
$G\to  M$.  
Consider the
follow exact sequence:
\begin{equation} \label{eq05}
0
\rightarrow \Omega^{g+1}G
\rightarrow G_{g+1}
\rightarrow \Omega^g G
\rightarrow   0
\end{equation}
Since the resolution $G$ is proper, it is routine to check 
that the sequence~\eqref{eq05} is  $\GF$-exact.
Standard dimension-shifting arguments give an isomorphism
$$\Ext_{\GF}^1 (\Omega^g G, \Omega^{g+1} G)\cong \Ext_{\GF}^{g+1}(M,
\Omega^{g+1} G) = 0$$
so Proposition~\ref{thm01} yields an exact sequence
\[ 0\to\Hom_R(\Omega^{g} G,\Omega^{g+1} G)\to 
\Hom_R(\Omega^{g} G, G_{g+1})\to
\Hom_R(\Omega^{g} G,\Omega^{g} G)\to 0 \]
which shows that sequence~\eqref{eq05} is split exact.  Since 
$G_{g+1}$ is Gorenstein flat, it follows from~\cite[(3.13)]{H}
that the same is true of
$\Omega^{g} G$, as desired.  
\end{proof}

We have a slightly weaker result for $\Tor^{\GF}$.

\begin{prop} \label{prop01}
Consider the following 
conditions on an $R$-module $M$ and an integer $g\geq 0$:
\begin{enumerate}[\rm(i)]
\item \label{item23}
$\Gfd_R M\leq g$;
\item \label{item24}
$\Tor^{\GF}_n (M,-) = 0$ for each $n>g$;
\item \label{item25}
$\Tor^{\GF}_{g+1}(M,-) = 0$.
\end{enumerate}
The implications 
\eqref{item23}$\implies$\eqref{item24}$\implies$\eqref{item25} hold.  
When $\Gfd_R M$ is finite, the conditions
\eqref{item23}--\eqref{item25} are equivalent.
\end{prop}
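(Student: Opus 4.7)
The plan is to follow the template of Theorem~\ref{thm14}: handle (i)$\Rightarrow$(ii)$\Rightarrow$(iii) by direct computation with a proper Gorenstein flat resolution, and reduce (iii)$\Rightarrow$(i) under the hypothesis $\Gfd_R M<\infty$ to classical $\Tor$ via a balance argument together with the Christensen--Foxby--Frankild equality $\Td_R M=\Gfd_R M$ for modules of finite Gorenstein flat dimension.

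For (i)$\Rightarrow$(ii), Theorem~\ref{thm14}(v) supplies a proper Gorenstein flat resolution $G\to M$ with $\Omega^g G\in\GF(R)$. Truncating at position $g$ produces the bounded complex
\[
\tilde G\colon\ 0\to \Omega^g G\to G_{g-1}\to\cdots\to G_0\to 0,
\]
which is itself a proper Gorenstein flat resolution of $M$; properness follows by a direct diagram chase using the factorization $G_g\twoheadrightarrow \Omega^g G\hookrightarrow G_{g-1}$ together with the GF-exactness of $G^+$. Since $\Tor_n^{\GF}(M,N)$ is independent of the proper resolution by Remark~\ref{rmk130702a}, one computes $\Tor_n^{\GF}(M,N)=\HH_n(\tilde G\otimes_R N)=0$ for $n>g$. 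The implication (ii)$\Rightarrow$(iii) is immediate.

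For (iii)$\Rightarrow$(i) under $\Gfd_R M<\infty$, the key step is a balance formula asserting that $\Tor_n^{\GF}(M,L)\cong\Tor_n^R(M,L)$ whenever $\fd_R L<\infty$. To establish this, fix a proper Gorenstein flat resolution $G\to M$ and a bounded flat resolution $F\to L$, and examine the two spectral sequences converging to the homology of the total complex of $G\otimes_R F$. The $F$-direction spectral sequence collapses: each $G_p\in\GF(R)$ satisfies $\Gfd_R G_p=0<\infty$, so by~\cite{H} we have $\Td_R G_p=0$, whence $\Tor_q^R(G_p,L)=0$ for $q\geq 1$; this identifies the total homology with $\HH_*(G\otimes_R L)=\Tor_*^{\GF}(M,L)$. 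The $G$-direction spectral sequence collapses because each $F_p$ is flat and $G^+$ is exact, identifying the total homology with $\Tor_*^R(M,L)$. Both spectral sequences converge since $F$ is bounded and $G$ is bounded below. Granting the balance, hypothesis (iii) forces $\Tor_{g+1}^R(M,L)=0$ for each $L$ with $\fd_R L<\infty$, so $\Td_R M\leq g$, and the equality $\Td_R M=\Gfd_R M$ then yields $\Gfd_R M\leq g$.

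The main obstacle is the balance formula: it demands the Tor-vanishing $\Tor_q^R(G_p,L)=0$ for Gorenstein flat $G_p$ and $L$ of finite flat dimension, together with some care over convergence of the two spectral sequences. Once the balance is secured, the reduction via $\Td_R$ is essentially a direct application of the Christensen--Foxby--Frankild machinery summarized in Section~\ref{sec01}.
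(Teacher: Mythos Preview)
Your argument is correct, but it takes a genuinely different route from the paper's for the implication \eqref{item25}$\Rightarrow$\eqref{item23}. The paper works directly with a proper Gorenstein flat resolution $G\to M$ and shows that $\Omega^g G$ is Gorenstein flat: since $\Gfd_R\Omega^g G<\infty$, by~\cite[3.14]{H} it suffices to check $\Tor_1^R(\Omega^g G,I)=0$ for every injective $I$, and this is read off by comparing the absolute and relative $\Tor$ long exact sequences attached to the GF-exact sequence $0\to\Omega^{g+1}G\to G_{g+1}\to\Omega^g G\to 0$, using the dimension-shift $\Tor_1^{\GF}(\Omega^g G,I)\cong\Tor_{g+1}^{\GF}(M,I)=0$. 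No spectral sequences and no appeal to $\Td_R$ are needed.

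Your approach instead establishes the balance isomorphism $\Tor_n^{\GF}(M,L)\cong\Tor_n^R(M,L)$ for $\fd_RL<\infty$ and then invokes the equality $\Td_RM=\Gfd_RM$ from~\cite{H}. This is more conceptual and has the pleasant byproduct of a balance result parallel to (and independent of) Theorem~\ref{thm03}\eqref{item04}, at the cost of importing heavier machinery. One small point to make explicit: from $\Tor_{g+1}^R(M,L)=0$ for all $L$ of finite flat dimension you conclude $\Td_RM\leq g$; this requires the standard syzygy shift on $L$ (replace $L$ by successive kernels in a projective resolution to descend from any degree $n>g$ to degree $g+1$), which you should state since \eqref{item25} alone gives vanishing only in the single degree $g+1$.
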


\begin{proof}
The only nontrivial thing to check is the implication
\eqref{item25}$\implies$\eqref{item23} under the hypothesis 
$\Gfd_R M<\infty$.  
We  show that $\Omega^g G$ is Gorenstein flat where $G\to M$ is a 
proper Gorenstein flat resolution.  Fix an injective $R$-module $I$.
Since $\Gfd_R \Omega^g G$ is finite, it 
sufffices by~\cite[(3.14)]{H}
to show that $\Tor^R_1(\Omega^g G,I)=0$.
Consider again the GF-exact sequence
\begin{equation} \label{eq08}
0
\rightarrow \Omega^{g+1}G
\rightarrow G_{g+1}
\rightarrow \Omega^g G
\rightarrow   0
\end{equation}
Since $G_{g+1}$ is Gorenstein flat, we have $\Tor^R_i(G_{g+1},I)=0$ for all $i\geqslant 1$.  
A piece of the long 
exact sequence gotten from applying $\Tor^R_i(-,I)$ to~\eqref{eq08} has the form
\[ 0\to \Tor^R_1(\Omega^g G,I)\to \Omega^{g+1}G\otimes_R I
\rightarrow G_{g+1}\otimes_R I
\rightarrow \Omega^g G\otimes_R I
\rightarrow   0. \]
Thus, it suffices to show that the map $\Omega^{g+1}G\otimes_R I
\rightarrow G_{g+1}\otimes_R I$ is injective.  
Apply Proposition~\ref{thm02} to~\eqref{eq08} in order to 
obtain the exact sequence
\[ 0 = \Tor^{\GF}_1(\Omega^g G,I)\to \Omega^{g+1}G\otimes_R I
\rightarrow G_{g+1}\otimes_R I
\rightarrow \Omega^g G\otimes_R I
\rightarrow   0 \]
which shows that the desired map is injective.
\end{proof}

\begin{obs} \label{rmk02}
Fix a projective resolution
$P\xra\pi M$ and a proper Gorenstein flat resolution
$G\xra\gamma M$.  
Lemma~\ref{lem01}\eqref{item26} yields a commutative 
diagram
$$\xymatrix{
P\ar[d]_{\vf} \ar[r]^{\pi} & M\ar[d]^= \\
G \ar[r]^{\gamma} & M.
}$$
For each $R$-module $N$, apply $\Hom_R(-, N)$ and
$-\otimes_R N$ to $\vf$ and take (co)homology to obtain 
homomorphisms
$$
\eta^n_{MN}\colon \Ext_{\GF}^n(M,N) \to \Ext_R^{n} (M,N) \qquad 
\delta_n^{MN}\colon \Tor_n^{R}(M,N)\to \Tor_n^{\GF}(M,N).
$$
The uniqueness statements in Lemma~\ref{lem01} imply that these maps are independent of 
the resolutions and lifts chosen.

Using Lemma~\ref{lem01}
it is routine to show that these maps are compatable with the 
usual maps:  Given $R$-module homomorphisms $\alpha\colon M\to M'$ and $\beta\colon N\to N'$, there 
exist commutative diagrams
$$\xymatrix{
\Ext^n_{\GF}(M,N) \ar[d]\ar[r]^{\eta^n_{MN}} & \Ext^n_{R}(M,N)\ar[d] &  
\Ext^n_{\GF}(M',N) \ar[d]\ar[r]^{\eta^n_{M'N}} & \Ext^n_{R}(M',N)\ar[d] \\
\Ext^n_{\GF}(M,N') \ar[r]^{\eta^n_{MN'}} & \Ext^n_{R}(M,N') &  
\Ext^n_{\GF}(M,N) \ar[r]^{\eta^n_{MN}} & \Ext^n_{R}(M,N) 
}$$
and similarly for $\Tor$.
Furthermore, Lemma~\ref{lem03} provides similar compatibility with the connecting 
homomorphisms in the long exact sequences constructed in 
Propositions~\ref{thm01}--\ref{thm02} and those coming from the long 
exact sequences on the usual derived functors.  The interested reader 
is encouraged to build the diagrams and verify their commutativity.
We  check one of these results explicitly, as it is used in the 
sequel.
\end{obs}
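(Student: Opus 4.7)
The plan is to verify the three assertions of Remark~\ref{rmk02} in turn: (a) independence of $\eta^n_{MN}$ and $\delta_n^{MN}$ from the choices of resolutions and the lift $\vf$; (b) naturality in both arguments; and (c) compatibility with the connecting homomorphisms appearing in Propositions~\ref{thm01}--\ref{thm02}. All three reduce to producing a chain homotopy between two candidate morphisms of complexes by invoking the uniqueness clause of Lemma~\ref{lem01}\eqref{item26}, followed by an application of $\Hom_R(-,N)$ or $-\otimes_RN$ (each of which preserves chain homotopies) and passage to (co)homology.

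For (a), I would fix alternative resolutions $\widehat{\pi}\colon\widehat{P}\to M$ and $\widehat{\gamma}\colon\widehat{G}\to M$ together with a second lift $\widehat{\vf}\colon\widehat{P}\to\widehat{G}$. Applying Lemma~\ref{lem01}\eqref{item27} to $\id_M$ yields homotopy equivalences $\sigma\colon P\to\widehat{P}$ and $\tau\colon G\to\widehat{G}$. Both $\tau\vf$ and $\widehat{\vf}\sigma$ are then chain maps $P\to\widehat{G}$ lifting $\id_M$ from $\pi$ to $\widehat{\gamma}$, so Lemma~\ref{lem01}\eqref{item26} forces $\tau\vf\simeq\widehat{\vf}\sigma$. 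Applying $\Hom_R(-,N)$ and $-\otimes_R N$ and identifying $\Ext_R^n(M,N)$ with $\HH^n(\Hom_R(P,N))\cong\HH^n(\Hom_R(\widehat{P},N))$ via $\sigma$ (and similarly for $\Ext^n_{\GF}$, $\Tor_n^R$, $\Tor_n^{\GF}$ via $\tau$) shows that the two candidate definitions of $\eta^n_{MN}$ and $\delta_n^{MN}$ agree.

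For (b), given $\alpha\colon M\to M'$, Lemma~\ref{lem01}\eqref{item27} supplies both $\ol{\alpha}\colon P\to P'$ and $\widetilde{\alpha}\colon G\to G'$ lifting $\alpha$. The composites $\widetilde{\alpha}\vf$ and $\vf'\ol{\alpha}$ are chain maps $P\to G'$ satisfying $\gamma'(\widetilde{\alpha}\vf)=\alpha\pi=\gamma'(\vf'\ol{\alpha})$, so the uniqueness clause of Lemma~\ref{lem01}\eqref{item26}, applied to the lift $\alpha\pi\colon P\to M'$ through $\gamma'$, gives $\widetilde{\alpha}\vf\simeq\vf'\ol{\alpha}$. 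Applying $\Hom_R(-,N)$ or $-\otimes_R N$ and taking (co)homology yields the diagrams involving $\alpha$. The diagrams involving $\beta\colon N\to N'$ require no homotopy at all: they follow from the strict bifunctoriality of $\Hom_R$ and $\otimes_R$, since $\beta$ acts on the second variable and commutes on the nose with $\Hom_R(\vf,-)$ and $\vf\otimes_R-$.

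For (c), starting from a GF-exact sequence $0\to M\to M'\to M''\to 0$, Lemma~\ref{lem03} furnishes a $3\times 3$ commutative diagram whose top and middle rows are degreewise split short exact sequences of projective resolutions and of proper Gorenstein flat resolutions, respectively, connected column-wise by the comparison maps $\phi,\phi',\phi''$. Applying $\Hom_R(-,N)$ or $-\otimes_RN$ produces a morphism of short exact sequences of complexes, and the naturality of the snake lemma then identifies the two connecting homomorphisms up to the comparison maps $\eta$ or $\delta$. I expect the main obstacle throughout to be purely organizational: the arguments require one to keep careful track of which resolutions, lifts, and homotopies appear in each diagram so that the uniqueness clauses of Lemma~\ref{lem01} and the joint-lifting statement of Lemma~\ref{lem03} can be invoked at the correct step. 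No mathematical input is needed beyond these two lemmas and the fact that $\Hom_R$ and $\otimes_R$ preserve chain homotopies.
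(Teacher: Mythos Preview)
Your proposal is correct and follows precisely the approach the paper intends: the remark itself has no separate proof in the paper beyond pointing to Lemmas~\ref{lem01} and~\ref{lem03} and declaring the verifications routine, and your sketch supplies exactly those routine details. The one explicit check the paper carries out (Proposition~\ref{prop02}) is the Tor case of your part (c), argued just as you describe via the degreewise-split diagram of Lemma~\ref{lem03}.
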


\begin{prop} \label{prop02}
Let $0 \to   M\xra{\mu}
M'\xra{\mu'} M''\to 0$ be a GF-exact sequence
of $R$-modules, and let $N$ be an
$R$-module.  The following diagram commutes
$$\xymatrix@C=6.5mm{
\cdots 
 \Tor_{n+1}^{R} (M'',N)
\ar[d]_{\delta_{n+1}^{M''N}}\ar[r]
& \Tor_{n}^{R}(M,N)
\ar[r]\ar[d]_{\delta_{n}^{MN}} &  \Tor_{n}^{R}(M',N)
\ar[r]\ar[d]_{\delta_{n}^{M'N}} &  \Tor_{n}^{R}(M'',N)\ar[d]_{\delta_{n}^{M''N}}
 \cdots \\
\cdots  \Tor_{n+1}^{\GF} (M'',N)
\ar[r]
& \Tor_{n}^{\GF}(M,N)
\ar[r] &  \Tor_{n}^{\GF}(M',N)
\ar[r] &  \Tor_{n}^{\GF}(M'',N)
 \cdots 
}$$
where the top row is the usual long exact sequence in Tor,
and the bottom row is the long exact sequence from Theorem~\ref{thm02}.
\end{prop}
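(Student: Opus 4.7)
The plan is to build the diagram directly from Lemma~\ref{lem03}, which already packages the data needed to compare the two long exact sequences. The idea is that once we have a morphism of short exact sequences of complexes that simultaneously computes ordinary $\Tor$ and relative $\Tor^{\GF}$, the whole proposition reduces to the standard naturality of the connecting homomorphism in the snake lemma.

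First I would fix projective resolutions $\pi\colon P\to M$ and $\pi''\colon P''\to M''$, together with proper Gorenstein flat resolutions $\gamma\colon G\to M$ and $\gamma''\colon G''\to M''$. By Lemma~\ref{lem01}\eqref{item26} there are chain maps $\phi\colon P\to G$ and $\phi''\colon P''\to G''$ with $\pi=\gamma\phi$ and $\pi''=\gamma''\phi''$; by Remark~\ref{rmk02} the vertical maps $\delta_n^{MN}$ and $\delta_n^{M''N}$ are exactly those induced on homology by $\phi\otimes_R N$ and $\phi''\otimes_R N$. Now apply Lemma~\ref{lem03} to produce a projective resolution $\pi'=\gamma'\phi'\colon P'\to M'$ and a proper Gorenstein flat resolution $\gamma'\colon G'\to M'$ fitting into a commutative diagram whose top two rows are degreewise split exact. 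In particular, the middle vertical map $\delta_n^{M'N}$ is induced by $\phi'\otimes_R N$.

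Second, because the top two rows in Lemma~\ref{lem03} are degreewise split exact, applying $-\otimes_R N$ yields short exact sequences of complexes
\[ 0\to P\otimes_R N\to P'\otimes_R N\to P''\otimes_R N\to 0 \]
\[ 0\to G\otimes_R N\to G'\otimes_R N\to G''\otimes_R N\to 0 \]
linked vertically by the chain maps $\phi\otimes_R N$, $\phi'\otimes_R N$, $\phi''\otimes_R N$, which commute with the horizontal differentials by construction. The associated long exact sequences in homology are the top and bottom rows of the diagram in the statement, where the bottom row is the sequence of Proposition~\ref{thm02} obtained from $\gamma'$, and independence on the choice of resolution (Remark~\ref{rmk130702a}) guarantees we may use $\gamma'$ here.

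Finally, the squares not involving connecting maps commute by functoriality of $\HH_n$, while the squares involving the connecting homomorphisms commute by the standard naturality of the snake-lemma connecting map with respect to a morphism of short exact sequences of complexes. The only real bookkeeping obstacle is confirming that the $\delta$ maps from Remark~\ref{rmk02} really do agree with the vertical maps of our morphism of short exact sequences; this is immediate once one notes that the comparison maps in Lemma~\ref{lem01}\eqref{item26} are unique up to homotopy, so any lift—in particular the $\phi,\phi',\phi''$ produced by Lemma~\ref{lem03}—induces the same map on homology as the one used to define $\delta$.
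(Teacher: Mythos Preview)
Your argument is correct and follows the same approach as the paper: apply $-\otimes_R N$ to the diagram of Lemma~\ref{lem03} and take homology. You have simply unpacked the details (naturality of the snake-lemma connecting map, independence of the $\delta$'s from the choice of lift) that the paper leaves implicit.
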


\begin{proof}
Apply $-\otimes_R N$ to the diagram given in
Lemma~\ref{lem03} and take homology to obtain the desired diagram.
\end{proof}

The next theorem shows that the bijectivity of the above maps 
characterizes the modules of finite flat dimension.

\begin{thm} \label{thm03}
Let  $M$ and $N$ be $R$-modules.
\begin{enumerate}[\rm(a)]
\item \label{item04}
If $\id_R N<\infty$, then 
$\delta^{MN}_n\colon\Tor_{n}^{R} (M,N) \to \Tor_n^{\GF} (M,N)$ is an 
isomorphism for each $n\geq 0$.
\item \label{item05}
If $N$ is cotorsion and $\fd_R N<\infty$, then 
$\eta^n_{MN}\colon \Ext_{\GF}^n(M,N) \to \Ext_R^{n} (M,N)$ is an 
isomorphism for each $n\geq 0$.
\item \label{item03}
If $\Gfd_R M$ is finite, then the following conditions are equivalent:
\begin{enumerate}[\rm(i)]
\item
$\fd_R M<\infty$;
\item
$\delta^{M-}_n\colon\Tor_{n}^{R} (M,-) \to \Tor_n^{\GF} (M,-)$ is an 
isomorphism of functors for each $n\geq 0$;
\item
$\delta^{M-}_n\colon\Tor_{n}^{R} (M,-) \to \Tor_n^{\GF} (M,-)$ is an 
isomorphism of functors for each $n>\Gfd_R M$;
\item
$\delta^{M-}_n\colon\Tor_{n}^{R} (M,-) \to \Tor_n^{\GF} (M,-)$ is an 
isomorphism of functors for $n\gg 0$.
\end{enumerate}
\end{enumerate}
\end{thm}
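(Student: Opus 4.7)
Parts (a) and (b) fit into a common template. Pick a projective resolution $\pi\colon P\to M$, a proper Gorenstein flat resolution $\gamma\colon G\to M$, and a comparison morphism $\varphi\colon P\to G$ with $\pi=\gamma\varphi$ from Lemma~\ref{lem01}\eqref{item26}. Because both $P$ and $G$ are resolutions of $M$ and $\varphi$ lifts $\id_M$, the mapping cone $C(\varphi)$ is homologically trivial, so proving that $\delta^{MN}_n$ (resp.\ $\eta^n_{MN}$) is bijective for every $n$ amounts to showing that $C(\varphi)\otimes_R N$ (resp.\ $\Hom_R(C(\varphi),N)$) is exact. Since $C(\varphi)_n=P_{n-1}\oplus G_n$ and projective modules are both $\Tor$- and $\Ext$-acyclic, the question reduces to an acyclicity statement for the Gorenstein flat terms $G_n$.

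For (a), the hypothesis $\id_R N<\infty$ gives $\Tor^R_i(G_n,N)=0$ for $i\geq 1$ by~\cite[(3.13)]{H}, and a routine dimension-shifting argument shows that any bounded-below exact complex whose terms are $N$-$\Tor$-acyclic remains exact after $-\otimes_R N$. For (b), one needs $\Ext^i_R(G_n,N)=0$ for $i\geq 1$ whenever $N\in\CF(R)$: when $N$ is flat cotorsion this is the standard Gorenstein-flat/cotorsion orthogonality, and the general case follows by induction on $\fd_R N$ using the flat cotorsion resolution supplied by Remark~\ref{rmk130706a}. The same mapping cone argument, now applied to $\Hom_R(-,N)$, delivers (b).

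For (c), the implications (ii)$\Rightarrow$(iii)$\Rightarrow$(iv) are formal, and (iv)$\Rightarrow$(i) combines Proposition~\ref{prop01}---which under $\Gfd_R M<\infty$ forces $\Tor^{\GF}_n(M,-)=0$ for $n>\Gfd_R M$---with the isomorphism hypothesis on $\delta^{M-}_n$ to conclude $\Tor^R_n(M,-)=0$ for $n\gg 0$, whence $\fd_R M<\infty$. The substantive implication is (i)$\Rightarrow$(ii). My plan is to exhibit, under the assumption $\fd_R M=d<\infty$, a proper Gorenstein flat resolution of $M$ consisting of flat modules. Such a $G$ simultaneously computes $\Tor^R(M,-)$ and $\Tor^{\GF}(M,-)$, and the comparison $\varphi\colon P\to G$ becomes a morphism between two flat resolutions of $M$, hence a quasi-isomorphism after $-\otimes_R N$ for every $N$; this yields bijectivity of $\delta^{MN}_n$ in each degree.

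To construct such a resolution I would iterate flat covers (which exist by~\cite{bican:amhfc}): let $F_0\to M$ be a flat cover with kernel $K_1$, and recurse on $K_1$. Wakamatsu's Lemma (Remark~\ref{rmk01}) makes each $K_i$ cotorsion, while the long exact sequence in $\Tor$ gives $\fd_R K_i\leq d-i$, so each $K_i$ for $i\geq 1$ belongs to $\CF(R)$; since the flat cover of a cotorsion module is itself flat cotorsion, the process terminates after $d$ steps with $K_d$ flat cotorsion, producing an exact sequence
\[
0\to K_d\to F_{d-1}\to\cdots\to F_0\to M\to 0
\]
of flat modules. The Gorenstein-flat/cotorsion Ext-vanishing from (b) gives $\Ext^1_R(H,K_i)=0$ for every $H\in\GF(R)$, which on breaking the augmented resolution into its constituent short exact sequences shows it is GF-exact; hence this flat resolution is proper. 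The main obstacle throughout is the Gorenstein-flat/cotorsion orthogonality underpinning both (b) and the iteration in (i)$\Rightarrow$(ii); the rest is careful bookkeeping with Wakamatsu's lemma, $\Tor$/$\Ext$ dimension-shifting, and the mapping cone.
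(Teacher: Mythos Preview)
Your proposal is correct, and the overall architecture matches the paper's: parts~(a) and~(b) rest on the $\Tor$/$\Ext$ orthogonality between Gorenstein flat modules and modules of finite injective dimension (resp.\ modules in $\CF(R)$), and (c)(i)$\Rightarrow$(ii) is proved by exhibiting a proper Gorenstein flat resolution of $M$ that consists of flat modules. The packaging, however, is genuinely different in two places.

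For (a) and (b), the paper argues by induction on $n$: it uses the GF-exact sequence $0\to\Omega^1G\to G_0\to M\to 0$, the ladder of long exact sequences from Proposition~\ref{prop02}, and a diagram chase comparing $\Tor^R$ and $\Tor^{\GF}$ (and dually for Ext). Your mapping-cone argument is a global repackaging of the same dimension-shift: the acyclicity lemma you invoke for a bounded-below exact complex with $N$-acyclic terms is precisely what the paper's induction unwinds degree by degree. Your version is shorter and avoids the explicit naturality diagram, at the cost of appealing to a lemma the paper does not isolate.

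For (c)(i)$\Rightarrow$(ii), the paper builds the desired resolution via Holm's approximation result~\cite[3.23]{H}: a Gorenstein flat precover $G\to M$ with cotorsion kernel $K$ of flat dimension $g-1$; since $G$ is Gorenstein flat of finite flat dimension it is flat, and the precover property gives GF-exactness automatically. You instead iterate flat covers, use Wakamatsu's Lemma to force the kernels into $\CF(R)$, and verify properness afterwards via $\Ext^1_R(H,K_i)=0$ for $H\in\GF(R)$ (which is~\cite[3.22]{H}). Your route trades the citation of~\cite[3.23]{H} for the Bican--El~Bashir--Enochs flat-cover theorem plus a direct properness check; the paper's route gets properness for free from the precover but needs the extra observation that a Gorenstein flat module of finite flat dimension is flat. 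Both are clean; yours is slightly more self-contained with respect to the Gorenstein-flat approximation machinery.
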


\begin{proof}
\eqref{item04}
We prove this result 
by induction on $n$.  The case $n=0$ is covered in 
Proposition~\ref{thm12}, so assume $n>0$.
Let $G\to M$ be a proper Gorenstein flat resolution 
and consider the GF-exact sequence
\begin{equation} \label{eq09}
0\to\Omega^1 G\to G_1\to M\to 0.
\end{equation}
Since $G_1$ is Gorenstein flat, there are equalities
\[ \Tor^{\GF}_i(G_1,N)=0=\Tor^R_i(G_1,N) \]
for each $i\geq 1$, where the first is from~\cite[(3.14)]{H} and the 
second is by Propsoition~\ref{prop01}.  
Applying Proposition~\ref{prop02} to~\eqref{eq09} yields a commutative 
diagram
$$\xymatrix@C=7mm{
0=\Tor^{R}_n(G_1,N) \ar[r]\ar[d] & \Tor^{R}_n(M,N) \ar[r] \ar[d]_{\delta_n^{MN}} & 
\Tor^{R}_{n-1}(\Omega^1G,N) \ar[r]\ar[d]_{\delta_{n-1}^{\Omega^1GN}}^{\cong} & \Tor^{R}_{n-1}(G_1,N)\ar[d]_{\delta_{n-1}^{G_1N}}^{\cong} \\
0=\Tor^{\GF}_n(G_1,N) \ar[r] & \Tor^{\GF}_n(M,N) \ar[r] & 
\Tor^{\GF}_{n-1}(\Omega^1G,N) \ar[r] & \Tor^{\GF}_{n-1}(G_1,N)
}$$
where the rows are exact.
The two right-hand vertical maps are bijective by induction, and a 
routine diagram chase shows that $\delta_n^{MN}$ is as well.

\eqref{item05}
The proof is dual to that of part~\eqref{item04}, using 
Theorem~\ref{thm14} and~\cite[(3.22)]{H}.

\eqref{item03}
(i)$\implies$(ii)
When
$\fd_R M$ is finite, we claim that $M$ has a
proper $\GF$-resolution which is also a flat resolution. 
(Once this is shown, one sees that this resolution
computes both $\Tor^R(M,-)$ and $\Tor^{GF}(M,-)$, so the natural morphism between them is an isomorphism.)
Set $g=\Gfd_RM=\fd_RM<\infty$.

From~\cite[3.23]{H}, we have an exact sequence
$$
0\to   K\to   G\xra{\phi}   M\to   0
$$
such that
$\phi$ is a
Gorenstein flat precover and $K$ is cotorsion with
$\fd_R K= g-1$.
Since $K$ and $M$ have finite flat dimension, so does $G$.
Thus, the fact that $G$ is Gorenstein flat implies that $G$ is flat.
If $g=0$, then $K=0$, so the isomorphism $G\xra\phi M$ gives the desired resolution.
If $g\geq 1$, then an induction argument shows that $K$ has a
proper $\GF$-resolution which is also a flat resolution; splice this resolution with the above exact sequence to
construct the desired resolution.

The implications (ii)$\implies$(iii)$\implies$(iv) are trivial. 
For (iv)$\implies$(i), the assumption that $\Gfd_R M$ is finite yields 
$\Tor^{\GF}_i(M,-)=0$ for all $i\gg 0$ by Proposition~\ref{prop01}.  
Thus, $\Tor^{R}_i(M,-)=0$ for all $i\gg 0$, and this implies the finiteness of $\fd_R M$.
\end{proof}

The next example shows that a version of part~\eqref{item03} of above theorem does not hold for (relative) derived functors of
Hom.

\begin{ex} \label{rmk03}
Let $R$ be an integral domain  that is not a field, with fraction field $K$. Then
$\Gfd_R K =\fd_R K=0$.
Thus, by Theorem~\ref{thm03}\eqref{item03}, we have
$\Tor_n^{\GF}(K,-) =0 $ for $n>0$.
On the other hand, we have $1\leq \pd_R K$, 
so $\Ext^1_{\GF}(K,-)=0\neq \Ext^1_{R}(K,-)$.
\end{ex}

\begin{thm} \label{thm05}
Let $M$ be an $R$-module. If $\Gfd_R M <\infty$, then
\begin{align*}
\Gfd_R M 
&= \sup \{i\mid\text{$\Ext_R^i (M,K) \neq 0$ for some $K\in\operatorname{CF}(R)$} \}\\
&= \sup \{i\mid\text{$\Ext_R^i (M,F) \neq 0$ for some cotorsion
flat $R$-module  $F$} \}\\
& =\sup \{i\mid\text{$\Ext_{\GF}^i (M,N)\neq 0$ for some
$R$-module $N$} \}.
\end{align*}
\end{thm}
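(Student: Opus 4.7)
The plan is to prove the chain of equalities $g = A = B = C$, where $g := \Gfd_R M$ and $A$, $B$, $C$ denote the three suprema on the right-hand side of the statement (in the order listed). Since every cotorsion flat module lies in $\CF(R)$, the inequality $B \leq A$ is automatic, so it suffices to establish $C = g$, $A \leq g$, and $g \leq B$.

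The equality $C = g$ falls out of Theorem~\ref{thm14}: the implication \eqref{item01}$\Rightarrow$\eqref{item02} gives $C \leq g$, while the contrapositive of \eqref{item20}$\Rightarrow$\eqref{item01} gives $g \leq C$. For $A \leq g$, I would appeal to Theorem~\ref{thm03}\eqref{item05}: when $K \in \CF(R)$, the comparison map $\eta^n_{MK}$ is an isomorphism, so $\Ext^n_R(M,K) \cong \Ext^n_{\GF}(M,K)$, and this latter module vanishes for $n > g$ by Theorem~\ref{thm14}.

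The substance of the proof is the remaining inequality $g \leq B$, for which I would use the approximation from Theorem~\ref{thm04}: an exact sequence $0 \to M \to K \to L \to 0$ with $L \in \GF(R)$ and $K \in \CF(R)$ satisfying $\fd_R K = g$. When $g = 0$ and $M \neq 0$, the monomorphism $M \hookrightarrow K$ already witnesses $\Hom_R(M,K) \neq 0$ with $K$ cotorsion flat. When $g \geq 1$, Remark~\ref{rmk130706a} supplies a resolution $0 \to F_g \to \cdots \to F_0 \to K \to 0$ by cotorsion flats of length exactly $g$ (the length can be taken equal to $\fd_R K$ by iterating flat covers and invoking Wakamatsu's lemma on the syzygies). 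Since $L$ is Gorenstein flat and $F_g \in \CF(R)$, the combination of Theorem~\ref{thm03}\eqref{item05} and Theorem~\ref{thm14} gives $\Ext^i_R(L, F_g) = 0$ for all $i \geq 1$, so the long exact sequence attached to $0 \to M \to K \to L \to 0$ produces an isomorphism $\Ext^g_R(M, F_g) \cong \Ext^g_R(K, F_g)$.

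To conclude I would show $\Ext^g_R(K, F_g) \neq 0$ by dimension-shifting along $F_\bullet$. Syzygies of flat modules in projective resolutions are themselves flat, so the cotorsion property of $F_g$ forces $\Ext^j_R(F_i, F_g) = 0$ for every $j \geq 1$; the standard dimension-shifting argument then yields $\Ext^g_R(K, F_g) \cong \Ext^1_R(Y, F_g)$, where $Y$ is the $(g-1)$-st syzygy in $F_\bullet$. The short exact sequence $0 \to F_g \to F_{g-1} \to Y \to 0$ represents this class; were it zero the sequence would split, making $Y$ a direct summand of the flat module $F_{g-1}$ and hence flat, which would give $K$ a flat resolution of length $g-1$ and contradict $\fd_R K = g$. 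I expect this nonvanishing step to be the main obstacle, since it is the only place where the precise value $\fd_R K = g$ supplied by Theorem~\ref{thm04} is needed rather than mere finiteness.
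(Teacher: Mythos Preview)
Your argument is correct, but it diverges from the paper's proof in two places, and one of those divergences is worth highlighting.

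For the inequality $A\leq g$ you invoke Theorem~\ref{thm03}\eqref{item05} to identify $\Ext_R^n(M,K)$ with $\Ext_{\GF}^n(M,K)$ for $K\in\CF(R)$ and then kill the latter via Theorem~\ref{thm14}. The paper instead argues directly by induction on $g$, with base case~\cite[3.22]{H} and a standard dimension shift. Your route is a clean repackaging of the same ingredients.

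The real difference is in $g\leq B$. The paper's argument is a two-line duality trick: by~\cite[3.14]{H} there is an injective $I$ with $\Tor_g^R(I,M)\neq 0$; taking a faithfully injective $E$ and setting $F=\Hom_R(I,E)$ gives a cotorsion flat module with
\[
\Ext_R^g(M,F)\cong\Hom_R(\Tor_g^R(I,M),E)\neq 0
\]
by Hom--tensor adjointness. Your argument instead pulls in the CF-preenvelope from Theorem~\ref{thm04}, builds a minimal-length cotorsion flat resolution of $K$ via iterated flat covers, and dimension-shifts to a nonsplit extension. This works (your check that the sequence $0\to F_g\to F_{g-1}\to Y\to 0$ cannot split, lest $\fd_RK<g$, is exactly the right obstruction), but note two things. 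First, it forward-references Theorem~\ref{thm04}, which sits in the next section; there is no circularity, since the proof of Theorem~\ref{thm04} does not use Theorem~\ref{thm05}, but the dependence runs against the paper's order. Second, the paper's duality argument is considerably shorter and avoids the resolution-building and splitting analysis entirely. What your approach buys is that it stays within the Ext world and makes explicit use of the approximation machinery the paper develops; what the paper's approach buys is brevity and independence from Section~\ref{sec130711a}.
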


\begin{proof}
Let
$g=\Gfd_R M <\infty$.
The equality
$$g=\sup \{i\mid\text{$\Ext_{\GF}^i (M,N)\neq 0$ for some
$R$-module $N$} \}$$
is by Theorem~\ref{thm14}.

By~\cite[3.14]{H}, there is an injective $R$-module $I$ such that,
$\Tor_g^R(I,M)\neq 0$.
Let $E$ be a faithfully injective $R$-module $E$, and set $F = \Hom_R (I,E)$.
Note that $F$ is flat and cotorsion.
Then we have
$$
\Ext_R^g (M,F)\cong\Hom_R (\Tor_g^R (I,M),E)  \neq 0$$ 
by a version of Hom-tensor adjointness.
This explains the first inequality in the next display, while the second one follows from the fact that every cotorsion 
flat $R$-module is in $\operatorname{CF}(R)$.
\begin{align*}
g
&\leq \sup \{i\mid\text{$\Ext_R^i (M,F) \neq 0$ for some cotorsion
flat $R$-module  $F$} \}\\
&\leq \sup \{i\mid\text{$\Ext_R^i (M,K) \neq 0$ for some $K\in\operatorname{CF}(R)$} \}
\end{align*}
We prove the next inequality by induction on $g$.
$$g\geq \sup \{i\mid\text{$\Ext_R^i (M,K) \neq 0$ for some $K\in\operatorname{CF}(R)$} \}$$
If $g=0$, then~\cite[3.22]{H} implies that $\Ext_R^i (M,K) = 0$ for all $i\geq 1$ and all $K\in\operatorname{CF}(R)$;
this gives the desired inequality in this case.
The inductive step $g\geq 1$ follows readily from a dimension-shifting argument
applied to an exact sequence
$$0\to M'\to G\to M\to 0$$
such that $G$ is Gorenstein flat and $\Gfd_RM'=g-1$.
\end{proof}

\section{Base Change} \label{sec130711a}

The next result provides certain ``approximations'' of modules of finite Gorenstein flat dimension.

\begin{thm} \label{thm04}
If $M$ is an $R$-module
with $\Gfd_R M <\infty$, then $M$ has a monic CF-preenvelope   
$\varphi \colon M \to K$
such that
$\fd_R K  = \Gfd_R M$
and
$\coker \varphi$
is Gorenstein~flat.
\end{thm}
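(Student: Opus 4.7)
The strategy is to combine an approximation result of Holm with a pushout construction. Write $g = \Gfd_R M$. The crux is a dual embedding lemma:
\emph{every Gorenstein flat $R$-module $H$ admits an exact sequence $0 \to H \to C \to Q \to 0$ with $C$ cotorsion flat and $Q$ Gorenstein flat.}
To establish this lemma, first use the definition of Gorenstein flat to write $0 \to H \to F \to H' \to 0$ with $F$ flat and $H'$ Gorenstein flat. Then invoke the complete cotorsion pair (flat, cotorsion) to embed $F$ in a cotorsion module via $0 \to F \to C \to F' \to 0$ with $C$ cotorsion and $F'$ flat; since the two outer terms are flat, $C$ is cotorsion flat. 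Forming the pushout of $F \to H'$ along $F \to C$ produces $0 \to H \to C \to Q \to 0$, and the snake lemma identifies $Q$ as an extension $0 \to H' \to Q \to F' \to 0$, so $Q$ is Gorenstein flat by closure of $\GF(R)$ under extensions.

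Next, apply~\cite[3.23]{H} to $M$ to get an exact sequence $0 \to K_1 \to G_1 \to M \to 0$ with $G_1$ Gorenstein flat and $K_1 \in \CF(R)$ of flat dimension $g-1$ (degenerating to $K_1 = 0$, $G_1 = M$ when $g=0$). Apply the lemma to $G_1$ to obtain $0 \to G_1 \to C \to P \to 0$ with $C$ cotorsion flat and $P$ Gorenstein flat. Then form the pushout
$$
\xymatrix{
0 \ar[r] & K_1 \ar[r]\ar@{=}[d] & G_1 \ar[r]\ar[d] & M \ar[r]\ar[d]^{\vf} & 0 \\
0 \ar[r] & K_1 \ar[r] & C \ar[r] & K \ar[r] & 0
}
$$
whose right-hand square gives the sought monomorphism $\vf\colon M \to K$, with $\coker\vf \cong P$ Gorenstein flat and a resulting exact sequence $0 \to M \to K \to P \to 0$.

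Three verifications then remain. (i) $K$ is cotorsion: apply $\Hom_R(L,-)$ with $L$ flat to the bottom row and use that $K_1 \in \CF(R)$ forces $\Ext^i_R(L, K_1) = 0$ for all $i \geq 1$ (dimension-shift through the finite cotorsion flat resolution provided by Remark~\ref{rmk130706a}), together with $\Ext^1_R(L,C)=0$ since $C$ is cotorsion. (ii) $\fd_R K = g$: the upper bound comes from the bottom row and $\fd_R K_1 = g-1$, while the lower bound follows from $0 \to M \to K \to P \to 0$ with $P$ Gorenstein flat, giving $g = \Gfd_R M \leq \fd_R K$. (iii) $\vf$ is a CF-preenvelope: since $\coker\vf = P$ is Gorenstein flat, the lifting obstruction $\Ext^1_R(P, L)$ vanishes for every $L \in \CF(R)$ by~\cite[3.22]{H}. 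The principal obstacle is the embedding lemma, which must simultaneously trade the flat $F$ for a cotorsion flat $C$ while preserving Gorenstein flatness of the quotient; once this is in hand, the pushout and the dimension bookkeeping are routine.
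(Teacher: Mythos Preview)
Your proof is correct and follows essentially the same route as the paper's own argument: both isolate the Gorenstein flat case via an embedding $0\to H\to C\to Q\to 0$ with $C$ cotorsion flat and $Q$ Gorenstein flat, then invoke~\cite[3.23]{H} and a pushout to handle general $g$. The only cosmetic differences are that the paper obtains $C$ as the pure injective envelope of the intermediate flat module (rather than via the complete cotorsion pair $(\text{flat},\text{cotorsion})$), and that your justification in step~(i) is more elaborate than necessary---with the paper's definition of cotorsion (vanishing of \emph{all} higher $\Ext$ against flats, not just $\Ext^1$), the vanishing $\Ext^i_R(L,K_1)=0$ is immediate and no appeal to Remark~\ref{rmk130706a} is needed.
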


\begin{proof}
Set $g:=\Gfd_R M$.
We proceed by cases.

For the first case, assume that $g = 0$. Then $M$ is
Gorenstein  flat, so
a piece of a complete flat resolution of $M$ provides an exact sequence
$$
0\to   M\to   Q\to   C\to   0
$$
such that $Q$ is flat and $C$ is Gorenstein flat.
Since $Q$ is flat, there is a cotorsion flat module $K$ containing $Q$, such
that the quotient $H:={K}/{Q}$ is flat. (Specifically,  $K$ is the ``pure injective
envelope'' of $Q$; see the proof of~\cite[Proposition 2.1]{enochs:mpirfm}.) 
We prove that the composition of inclusions $M\xra{\vf} K$ has the desired properties.
Set $L={K}/{M}$.
By assumption, we have
$C = {Q}/{M}$, which is a submodule of ${K}/{M} = L$ such that 
${L}/{C}\cong H$. 
Since $\Gfd_R C  = 0= \fd_R H$, we have
$\Tor_1^R (J,C) =0=\Tor_1^R (J,H)$ for all injective modules $J$ by~\cite[3.14]{H}.
From the short exact sequence
$$
0\to   C\to   L\to
H\to   0 $$ 
and the associated long exact sequence in $\Tor^R_i(J,-)$
we have $\Tor_1^R (J,L) = 0$ for all injective  $J$,
so
$L$ is Gorenstein flat. Thus,  from~\cite[3.22]{H} we have $\Ext^i_R(L,Q)=0$ for all $i\geq 1$ and all $Q\in\CF(R)$.
From the short exact sequence $$0\to M\xra{\vf}K\to L\to 0$$ and the associated long exact sequence in
$\Ext^i_R(-,Q)$, we conclude that $\vf$ is a CF-preenvelope

For the second case, assume  $g>0$. 
From~\cite[3.23]{H}, we have an exact sequence
$$
0\to   X\to   G\xra\phi   M\to   0
$$
such that
$\phi$ is a
Gorenstein flat precover and $X$ is cotorsion with
$\fd_R X= g-1$.
By the previous case, there is a second exact sequence
$$0\to G\xra{\alpha} Q\to G'\to 0$$ 
such that $\alpha$ is a CF-preenvelope where $Q$ is flat  cotorsion
and $G'$ is Gorenstein flat.
Consider the
next push-out diagram.
$$\xymatrix{
&&0\ar[d]&0\ar[d] \\
0\ar[r]
&X\ar[r]\ar[d]_=
&G\ar[r]\ar[d]
&M\ar[r]\ar[d]
&0\\
0\ar[r]
&X\ar[r]
&Q\ar[r]\ar[d]
&K\ar[r]\ar[d]
&0\\
&&G'\ar[d]\ar[r]^-=&G'\ar[d] \\
&&0&0
}$$
The middle row of this diagram shows that $K$ is  cotorsion with $\fd_R K=g$.
Since $G'$ is Gorenstein flat,  we have $\Ext^1_R(L,Q)=0$ for all  $Q\in\CF(R)$ by~\cite[3.22]{H}.
Thus, the map $M\to K$ in the right-most column of this diagram has the desired properties.
\end{proof}

The next result connects ``strict'' and proper resolutions.

\begin{prop}\label{prop130708a}
Let $M$ be an $R$-module such that $g=\Gfd_RM<\infty$.
\begin{enumerate}[\rm(a)]
\item\label{prop130708a1}
Then $M$ has an augmented Gorenstein flat resolution
$$0\to F_g\to\cdots F_1\to G_0\to M\to 0$$
such that each $F_i$ is flat cotorsion.
\item\label{prop130708a2}
Every augmented Gorenstein flat resolution
\begin{equation}\label{eq130708a}
0\to F_n\xra{\partial_n}\cdots\xra{\partial_2} F_1\xra{\partial_1} G_0\xra{\tau} M\to 0
\end{equation}
such that each $F_i$ is flat cotorsion
is proper.
\end{enumerate}
\end{prop}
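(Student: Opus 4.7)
For part (a), the plan is to invoke Holm's result~\cite[3.23]{H} (already used in the proof of Theorem~\ref{thm04}), which supplies an exact sequence
\[
0\to X\to G_0\to M\to 0
\]
in which $G_0$ is Gorenstein flat and $X$ is cotorsion with $\fd_R X=g-1$. (The base case $g=0$ is immediate: $M$ is itself Gorenstein flat, so one takes $G_0=M$ and no $F_i$'s.) Since $X\in \CF(R)$, Remark~\ref{rmk130706a} furnishes a finite resolution $0\to F_g\to\cdots\to F_1\to X\to 0$ by flat cotorsion modules. Splicing these two sequences along $X$ yields the required augmented Gorenstein flat resolution $0\to F_g\to\cdots\to F_1\to G_0\to M\to 0$.

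For part (b), let $K_1:=\ker(G_0\to M)$ and, for $2\leq i\leq n$, set $K_i:=\ker(F_{i-1}\to F_{i-2})$ with the convention $F_0:=G_0$ and $K_{n+1}:=0$. Then the augmented resolution~\eqref{eq130708a} decomposes into the short exact sequences
\[
0\to K_1\to G_0\to M\to 0
\qquad\text{and}\qquad
0\to K_{i+1}\to F_i\to K_i\to 0\quad(1\leq i\leq n).
\]
For each $1\leq i\leq n$, the truncation $0\to F_n\to\cdots\to F_i\to K_i\to 0$ is a finite resolution of $K_i$ by flat cotorsion modules, so $K_i\in \CF(R)$ by Remark~\ref{rmk130706a}. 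For any $H\in\GF(R)$, Holm's result~\cite[3.22]{H} then yields $\Ext^1_R(H,K_i)=0$. Consequently each of the displayed short exact sequences remains exact after applying $\Hom_R(H,-)$, and splicing these $\Hom_R(H,-)$-sequences recovers exactness of $\Hom_R(H,-)$ on the full complex~\eqref{eq130708a}. Hence the augmented resolution is GF-exact, that is, proper.

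Neither part poses a genuine obstacle: (a) is a splicing of exact sequences coming from~\cite[3.23]{H} and Remark~\ref{rmk130706a}, while (b) reduces by dimension-shifting to the Ext-vanishing statement~\cite[3.22]{H} applied to syzygies that are forced to lie in $\CF(R)$ by the flat-cotorsion tail of the resolution. The only place where one must be careful is identifying the correct truncation at each step of (b) so that every positive-index syzygy inherits a finite flat cotorsion resolution and thus membership in $\CF(R)$.
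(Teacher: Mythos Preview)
Your argument is correct and matches the paper's approach in both parts. One small point in~(a): Remark~\ref{rmk130706a} as stated only yields \emph{some} finite flat cotorsion resolution of $X$, not necessarily one of length exactly $g-1$, so it does not directly give the displayed form $0\to F_g\to\cdots\to F_1\to G_0\to M\to 0$. The paper instead cites Remark~\ref{rmk01} (iterated flat covers plus Wakamatsu's Lemma), which does terminate at $F_g$ because each successive kernel is again cotorsion with flat dimension dropping by one. Alternatively, you can truncate any finite flat cotorsion resolution of $X$ at position $g-1$ and observe, via Remark~\ref{rmk130706a} applied to the tail, that the kernel there is flat and in $\CF(R)$, hence flat cotorsion. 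Part~(b) is the same argument as the paper's, up to reindexing (you work with kernels $K_i$, the paper with cokernels $X_i=\coker(\partial_i)$); both identify the syzygies as members of $\CF(R)$ and then invoke~\cite[3.22]{H}.
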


\begin{proof}
\eqref{prop130708a1}
From~\cite[3.23]{H}, we have an exact sequence
$$
0\to   X\to   G_0\xra\phi   M\to   0
$$
such that
$\phi$ is a
Gorenstein flat precover and $X$ is cotorsion with
$\fd_R X= g-1$. Remark~\ref{rmk01} shows that $X$ admits a  resolution
$$0\to F_g\to\cdots\to F_1\to X\to 0$$
such that each $F_i$ is flat cotorsion.
Splice the displayed sequences to find the desired resolution.

\eqref{prop130708a2}
Since $F_n$ and $F_{n-1}$ are cotorsion, it is straightforward to show that 
$\coker(\partial_n)$ is cotorsion, and similarly for each $X_i:=\coker(\partial_{i})$ with $i\geq 2$.
Furthermore,  $X_i$ has finite flat dimension for all $i\geq 2$.
Thus, we have $\Ext^j_R(H,X_i)=0$ for all $i\geq 2$, all $j\geq 1$ and for each Gorenstein flat $R$-module $H$, by~\cite[3.22]{H}.
Thus, each of the following sequences is $\Hom_R(H,-)$ exact
\begin{gather*}
0\to X_i\to F_{i-2}\to X_{i-1}\to 0 \\
0\to X_2\to G_0\to M\to 0.
\end{gather*}
It follows that the sequence~\eqref{eq130708a} is $\Hom_R(H,-)$ exact, hence proper.
\end{proof}

The remainder of this section is devoted to studying the behavior of these constructions
along surjections $R\to R/(\mathbf x)=\ol R$ where $\mathbf x$ is an $R$-regular sequence.
For convenience we set $\overline{\GF}=\GF(\ol R)$

\begin{thm} \label{thm09}
Let $\mathbf x=x_1,\ldots,x_n\in R$ be an $R$-regular sequence and set ${\ol{(-)}} =  - \otimes_R R/(\mathbf x)$. Let $M$ be an $R$-module, and let $N$ be an $\ol R$-module.
If
$\Gfd_R M <\infty $
and $\mathbf x$ is  $M$-regular, then $\Gfd_{\ol R}\ol M\leq\Gfd_R(M)$, and for all $n$ there are isomorphisms
$\Ext_{\GF}^n (M,N) \cong\Ext_{\overline{\GF}}^n (\ol{M},N)$ and 
$\Tor_n^{\GF} (M,N) \cong\Tor_n^{\overline{\GF}}(\ol{M}, N)$.
\end{thm}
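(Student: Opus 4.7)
Set $g := \Gfd_R M$.  The plan is to choose a proper Gorenstein flat resolution of $M$ over $R$ that base-changes to a proper Gorenstein flat resolution of $\overline M$ over $\overline R$; the three conclusions will then follow from Hom-tensor adjunction applied to this resolution.

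First, by Proposition~\ref{prop130708a}, take $G \to M$ of the form
$$0 \to F_g \to \cdots \to F_1 \to G_0 \to M \to 0$$
with each $F_i$ flat cotorsion.  Each $F_i$ is flat, so $\mathbf x$ is $F_i$-regular automatically.  Combining this with the $M$-regularity hypothesis and dimension-shifting along the short exact sequence $0 \to X \to G_0 \to M \to 0$ (where $X$ has finite flat dimension $g-1$ by the construction underlying Proposition~\ref{prop130708a}), I expect to show that $\mathbf x$ is also regular on $G_0$ and on every intermediate syzygy.  Consequently, applying $-\otimes_R \overline R$ yields an exact complex
$$0 \to \overline{F_g} \to \cdots \to \overline{F_1} \to \overline{G_0} \to \overline M \to 0,$$
which I denote $\overline G \to \overline M$; each $\overline{F_i}$ is flat over $\overline R$.

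Next, I would verify that $\overline{G_0}$ is Gorenstein flat over $\overline R$.  The change-of-rings spectral sequence degenerates under the $\mathbf x$-regularity on $G_0$ to yield $\Tor_i^{\overline R}(J, \overline{G_0}) \cong \Tor_i^R(J, G_0)$ for every $\overline R$-module $J$ and all $i \geq 0$.  When $J$ is $\overline R$-injective, it has finite $R$-injective dimension ($\leq n$), so dimension-shifting via~\cite[(3.14)]{H} forces $\Tor^R_i(J, G_0) = 0$ for $i\geq 1$.  Combined with the flat resolution of $\overline{G_0}$ over $\overline R$ obtained by reducing any flat resolution of $G_0$, this yields $\overline{G_0} \in \overline{\GF}$.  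Therefore $\overline G \to \overline M$ witnesses $\Gfd_{\overline R}\overline M \leq g$; applying Proposition~\ref{prop130708a}\eqref{prop130708a2} over $\overline R$, after verifying cotorsion of each $\overline{F_i}$, shows that $\overline G \to \overline M$ is proper.  For the Ext and Tor isomorphisms, since $N$ is an $\overline R$-module, Hom-tensor adjunction gives natural isomorphisms of complexes
$$\Hom_R(G, N) \cong \Hom_{\overline R}(\overline G, N), \qquad G \otimes_R N \cong \overline G \otimes_{\overline R} N;$$
taking (co)homology and using the two proper resolutions $G \to M$ and $\overline G \to \overline M$ delivers the claimed natural identifications in each degree.

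The main obstacle is establishing $\mathbf x$-regularity on $G_0$ and Gorenstein flatness of $\overline{G_0}$ over $\overline R$.  Neither holds for an arbitrary Gorenstein flat module; both hinge on the specific role of $G_0$ in the finite resolution of Proposition~\ref{prop130708a} together with the $M$-regularity hypothesis.  A secondary technical task is verifying that each $\overline{F_i}$ is cotorsion over $\overline R$ (so that Proposition~\ref{prop130708a}\eqref{prop130708a2} can be invoked to conclude properness), which I would handle via the change-of-rings identification $\Ext^p_{\overline R}(\overline H, \overline{F_i}) \cong \Ext^p_R(\overline H, F_i)$ valid for $F_i$ flat over $R$ and $\overline H$ an $\overline R$-module.
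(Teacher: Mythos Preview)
Your overall strategy coincides with the paper's: take the resolution from Proposition~\ref{prop130708a}, show its reduction modulo $\mathbf x$ is again a proper Gorenstein flat resolution over $\overline R$, then read off the Ext and Tor isomorphisms by adjunction. The paper also reduces to $n=1$ at the outset; this is convenient because the two technical points you flag become one-variable statements.

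On those technical points, two corrections are needed. First, your change-of-rings identification for cotorsion is off by a shift: for $F$ flat over $R$ and $H$ an $\overline R$-module one has $\Ext^p_{\overline R}(H,\overline F)\cong\Ext^{p+n}_R(H,F)$, not $\Ext^p_R(H,F)$ (the spectral sequence collapses onto the line $q=n$ since $\Ext^q_R(\overline R,F)=0$ for $q\neq n$). The conclusion can still be rescued---a flat $\overline R$-module has $\fd_R\leq n$, so dimension-shifting against the cotorsion module $F$ kills $\Ext^{p+n}_R$---but the paper avoids this altogether by invoking Enochs' structure theorem~\cite[(2.3)]{E2}: a cotorsion flat $F$ is a summand of $\Hom_R(E_1,E_2)$ with $E_1,E_2$ injective, and a direct Hom-evaluation/adjunction computation shows $\overline R\otimes_R\Hom_R(E_1,E_2)\cong\Hom_{\overline R}(\Hom_R(\overline R,E_1),\Hom_R(\overline R,E_2))$ is cotorsion flat over~$\overline R$.

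Second, your argument for $\overline{G_0}\in\overline{\GF}$ is incomplete. You correctly obtain $\Tor^{\overline R}_i(J,\overline{G_0})=0$ for $\overline R$-injective $J$ and $i\geq 1$, but~\cite[(3.14)]{H} only converts this into Gorenstein flatness under the hypothesis $\Gfd_{\overline R}\overline{G_0}<\infty$, which you have not established; ``reducing a flat resolution of $G_0$'' gives only the left half of what a complete flat resolution requires. The paper sidesteps this by citing the proof of~\cite[3.11]{sahandi:dabf} (in the one-variable case), which produces both the weak $G_0$-regularity of $x_1$ and the Gorenstein flatness of $\overline{G_0}$ directly. Note also that ``$\mathbf x$-regular on $G_0$'' is too strong; only weak regularity holds in general (multiplication is injective because $G_0$ embeds in a flat module, but the successive quotients may vanish), and weak regularity is exactly what is needed for exactness of $\overline{G^+}$ via~\cite[(1.1.15)]{BH}.
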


\begin{proof}
Arguing by induction on $n$, it suffices to consider the case $n=1$.

Claim: 
if $F$ is a cotorsion flat
$R$-module then $\ol F$ is a cotorsion flat $\ol R$-module.
By~\cite[(2.3)]{E2} there are injective $R$-modules $E_i$ for $i
= 1,2$ such that $F$ is a direct summand of the flat $R$-module
$\Hom_R (E_1,E_2)$. It follows that $\ol F$ is a direct summand of the 
 $\ol R$-module
\begin{align*}
\ol R \otimes_R \Hom_R (E_1,E_2) 
&\cong \Hom_R (\Hom_R(\ol R,E_1),E_2)\\
&\cong \Hom_{\ol R} (\Hom_R(\ol R,E_1),\Hom_R(\ol R,E_2)).
\end{align*}
The isomorphisms here are Hom-evaluation and  adjointness.
Since each module $\Hom_R(\ol R,E_i)$ is injective over $\ol R$,
the claim follows from~\cite[(2.3)]{E2}.

Set
$g=\Gfd_R M<\infty$. Proposition~\ref{prop130708a}\eqref{prop130708a1} provides a proper Gorensetin flat resolution $G\to M$ of the form
$$
G^{+}\quad: 0\to   F_g\to  \cdots \to
F_1\to   G_0\to   M\to   0
$$
such that each $F_i$ is cotorsion flat. 
Since $x_1$ is $R$-regular, the proof of~\cite[3.11]{sahandi:dabf} shows that
$x_1$ is weakly $G_0$-regular and $\ol{G_0}$ is  Gorenstein flat 
over $\ol R$.
Since $x_1$ is $M$-regular, weakly $G_0$-regular, and weakly $F_i$-regular for each $i$, we conclude from~\cite[(1.1.15)]{BH}
that $\ol{G^+}=\ol G^+$ is exact.
In light of the above claim, Proposition~\ref{prop130708a}\eqref{prop130708a1} implies that $\ol G$ is 
a proper Gorenstein flat resolution of
$\ol M$ over $\ol R$. In particular, we have $\Gfd_{\ol R} \ol{M}\leq g$.

Using this, the isomorphism for relative Ext follows from the next sequence.
\begin{align*}
\Ext_{\overline{\GF}}^n ({\ol M} ,N) = & \HH^n (\Hom_{\ol R} (G\otimes_R \ol R, N)) \\
  = &  \HH^n (\Hom_R (G,\Hom_{\ol R} (\ol R,N))) \\
  = &  \HH^n (\Hom_R (G,N)) \\
  = &  \Ext_{\GF}^n (M,N)
\end{align*}
The isomorphism for relative Tor
is verified similarly.
\end{proof}

\begin{obs}\label{rmk130709b}
In~\cite[3.10--3.11]{sahandi:dabf}, the following is claimed:
Let $M$ be an $R$-module, and let $\mathbf x=x_1,\ldots,x_n\in R$ be a  sequence that is $R$-regular and $M$-regular.
Set ${\ol{(-)}} =  - \otimes_R R/(\mathbf x)$.
\begin{enumerate}[\rm(a)]
\item\label{item14'}
If $\Td_R M<\infty$, then 
$\Td_R M = \Td_{\ol{R}}\ol M$.
\item\label{item15'}
If
$\Gfd_R M <\infty$
then
$\Gfd_R
\ol M- n=\Gfd_R M =\Gfd_{\ol{R}}
\ol M$.
\end{enumerate}
The following example shows that the first and third equalities fail, even when $R$ is a complete regular local ring.
\newcommand{\ul}{\underline}
\newcommand{\q}{\mathfrak{q}}

Let $k$ be a field and set $R=k[\![X_1,\ldots,X_a,Y_1,\ldots,Y_n,Z_1,\ldots,Z_c]\!]=k[\![\ul X,\ul Y,\ul Z]\!]$
with $a,c\geq 0$ and $n\geq 1$.
Consider the $R$-module
$$M=(R_{\p}/(\ul X)R_{\p})\oplus (R_{\q}/(\ul Z)R_{\q})$$
where  $\p=(\ul X)$ and $\q=(\ul Y,\ul Z)$.
Then the sequence $\ul Y$ is $R$-regular and $M$-regular.

It is straightforward to show that $\fd_R(R_{\p}/(\ul X)R_{\p})=a$:
indeed, the Koszul complex $K^{R_{\p}}(\ul X)$ shows that $\fd_R(R_{\p}/(\ul X)R_{\p})\leq a$,
and the isomorphism
$$\Tor^R_a(R_{\p}/(\ul X)R_{\p},R_{\p}/(\ul X)R_{\p})\cong R_{\p}/(\ul X)R_{\p}$$ implies that
$\fd_R(R_{\p}/(\ul X)R_{\p})\geq a$.
Similarly, we have $\fd_R(R_{\q}/(\ul Z)R_{\q})=c$,
so $\fd_RM=\max\{a,c\}$.

Set ${\ol{(-)}} =  - \otimes_R R/(\ul Y)$.
Then we have
$$\ol M\cong R_{\q}/(\ul Y,\ul Z)R_{\q}.$$
The next equalities follow from~\cite[(3.16)]{H},
with computations like the ones above:
\begin{gather*}
\Td_{\ol R}\ol M=\Gfd_{\ol R}\ol M=\fd_{\ol R}\ol M=c \\
\Td_RM=\Gfd_RM=\fd_RM=\max\{a,c\} \\
\Td_R\ol M=\Gfd_R\ol M=\fd_R\ol M=n+c.
\end{gather*}

It is worth noting that~\cite[3.12]{sahandi:dabf} is correct since the only part of~\cite[3.10--3.11]{sahandi:dabf}
needed for the proof is the inequality $\Gfd_{\ol R}\ol M\leq\Gfd_R(M)$
from Theorem~\ref{thm09}. Also, the first equality in item~\eqref{item15'} above holds; see Proposition~\ref{thm10} below.

The error in the proof of item~\eqref{item14'} above is as follows. 
The authors argue by induction on $\Td_RM$.
In the induction step, the authors consider an exact sequence
$$0\to K\to F\to M\to 0$$
where $F$ is flat and $\Td_RK=\Td_RM-1$.
Since $\mathbf x$ is $R$-regular and $M$-regular, it is weakly $F$-regular and weakly $K$-regular.
However, the sequence $\mathbf x$ may not be $K$-regular, that is, the module $\ol K$ may be 0.
The induced sequence 
$$0\to \ol K\to \ol F\to \ol M\to 0$$
is exact. The authors conclude by induction that $\Td_{\ol R}\ol K=\Td_RK$.
If this were true, then their desired conclusion would follow. However, one can have $\ol K=0$, which
does not allow for the desired conclusion.

The error in the proof of item~\eqref{item15'} above is similar, coupled with an application of item~\eqref{item14'}.
\end{obs}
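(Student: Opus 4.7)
\medskip

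\noindent\textbf{Proof proposal for Remark~\ref{rmk130709b}.}
The content to verify consists of three parts: (i) that $\underline Y$ is regular on both $R$ and $M$, (ii) the three dimension formulas displayed after ``The next equalities follow\ldots'', and (iii) the diagnosis of the errors in~\cite{sahandi:dabf}. I will spend most of the effort on (ii); parts (i) and (iii) are essentially bookkeeping.

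For (i), I would first note that $\underline Y$ is $R$-regular because $R$ is a formal power series ring over a field and $\underline Y$ is a subset of the indeterminates. For $M$-regularity, I would argue summand by summand. In the first summand $R_{\p}/(\underline X)R_{\p}$, each $y_j \notin \p=(\underline X)$, hence $y_j$ is a unit in $R_{\p}$; therefore multiplication by $y_j$ is an automorphism (a fortiori injective) on this summand, and the quotient by $(\underline Y)$ is zero. In the second summand $R_{\q}/(\underline Z)R_{\q}$, I identify this with $(R/(\underline Z))_{\overline\q}$ where $\overline\q=\q/(\underline Z)$, and note that in $R/(\underline Z)\cong k[\![\underline X,\underline Y]\!]$ the sequence $\underline Y$ is regular; since regularity is preserved by localization, $\underline Y$ is regular on $R_{\q}/(\underline Z)R_{\q}$. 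Regularity on the direct sum follows, and tensoring yields $\overline M\cong R_{\q}/(\underline Y,\underline Z)R_{\q}$ as claimed.

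For (ii), the main task is the three flat-dimension computations; the $\Td$ and $\Gfd$ equalities then follow automatically from~\cite[(3.16)]{H} since each flat dimension in question is finite. The general template is: for a prime $\mathfrak{r}$ generated by a subset $\underline W$ of the regular system of parameters, with length $|\underline W|=w$, the Koszul complex $K^{R_{\mathfrak{r}}}(\underline W)$ is a finite free resolution of the residue field $R_{\mathfrak{r}}/\mathfrak{r}R_{\mathfrak{r}}$ over $R_{\mathfrak{r}}$, and since $R_{\mathfrak{r}}$ is $R$-flat, this is also a flat $R$-resolution of length $w$. Hence $\fd_R(R_{\mathfrak{r}}/\mathfrak{r}R_{\mathfrak{r}})\leq w$. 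For the matching lower bound I would use the computation
\[
\Tor^R_w(R_{\mathfrak{r}}/\mathfrak{r}R_{\mathfrak{r}},R_{\mathfrak{r}}/\mathfrak{r}R_{\mathfrak{r}})
\cong \Tor^{R_{\mathfrak{r}}}_w(R_{\mathfrak{r}}/\mathfrak{r}R_{\mathfrak{r}},R_{\mathfrak{r}}/\mathfrak{r}R_{\mathfrak{r}})
\cong R_{\mathfrak{r}}/\mathfrak{r}R_{\mathfrak{r}}\neq 0,
\]
where the first isomorphism uses that the Koszul resolution is flat over $R$ and the tensor product collapses the localization, and the second is the standard top Koszul homology for a regular local ring. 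Applying this with $(\underline W,\mathfrak{r})=(\underline X,\p),(\underline Z,\q),(\underline Y\cup\underline Z,\q)$ yields
$\fd_R(R_{\p}/(\underline X)R_{\p})=a$, $\fd_R(R_{\q}/(\underline Z)R_{\q})=c$, and $\fd_R\overline M=n+c$; additivity of flat dimension over direct sums gives $\fd_R M=\max\{a,c\}$. An analogous computation over $\overline R$ with $\overline\q=(\underline Z)\overline R$ gives $\fd_{\overline R}\overline M=c$. Invoking~\cite[(3.16)]{H} upgrades each of these to the corresponding $\Td$ and $\Gfd$ equalities.

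With the three displays in hand, choosing $a>c$ (say $a=1,c=0,n=1$) immediately shows that the first equality $\Td_RM=\Td_{\overline R}\overline M$ and the third equality $\Gfd_RM=\Gfd_{\overline R}\overline M$ in~\cite[3.10--3.11]{sahandi:dabf} both reduce to $\max\{a,c\}=c$ and so fail. For part (iii), I would trace through the induction in~\cite{sahandi:dabf}: there one takes an exact sequence $0\to K\to F\to M\to 0$ with $F$ flat and $\Td_RK=\Td_RM-1$, and tries to reduce modulo $\underline x$. The subtle point, and what I would highlight as the main obstacle in presenting the error, is that although $\underline x$ is weakly $F$-regular and weakly $K$-regular, $\overline K$ may vanish, so the induction hypothesis $\Td_{\overline R}\overline K=\Td_RK$ need not apply; the example above exhibits exactly this pathology, since localization at $\p=(\underline X)$ kills the first summand after reduction by $\underline Y$. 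The proof of~\cite[3.11]{sahandi:dabf} reduces to~\cite[3.10]{sahandi:dabf} at the same step, so the same gap applies. Finally, I would record that the surviving inequality $\Gfd_{\overline R}\overline M\leq\Gfd_RM$ coming from Theorem~\ref{thm09} is all that~\cite[3.12]{sahandi:dabf} requires, and that the middle equality of item~\eqref{item15'} is salvaged by Proposition~\ref{thm10}.
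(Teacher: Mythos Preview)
Your proposal is correct and follows the paper's own argument (which is embedded in the remark rather than given as a separate proof): Koszul complexes over the localization for the upper bounds on flat dimension, self-Tor for the lower bounds, and \cite[(3.16)]{H} to promote finite $\fd$ to $\Td$ and $\Gfd$. One small wrinkle worth fixing: your template requires the prime $\mathfrak r$ to be \emph{generated} by $\underline W$, yet you then apply it with $(\underline W,\mathfrak r)=(\underline Z,\mathfrak q)$ where $\mathfrak q=(\underline Y,\underline Z)\neq(\underline Z)$; the Koszul and Tor computations still go through verbatim because $\underline Z$ is a regular sequence in $R_{\mathfrak q}$, so simply relax the template to require only $(\underline W)\subseteq\mathfrak r$ and conclude $\fd_R(R_{\mathfrak r}/(\underline W)R_{\mathfrak r})=|\underline W|$.
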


With Remark~\ref{rmk130709b} in mind, an analysis of the proof of~\cite[3.10--3.11]{sahandi:dabf}
provides the following. Note that 
the example in Remark~\ref{rmk130709b} shows that, when $\fd_{\ol R}\ol M<\infty$,
the quantity  $\Td_{\ol R}\ol M$ 
can take on any value up to $\Td_RM$, and similarly for $\Gfd$.

\begin{prop} \label{thm10}
Let $M$ be an $R$-module, and let $\mathbf x=x_1,\ldots,x_n\in R$ be a  sequence that is $R$-regular and $M$-regular.
Set ${\ol{(-)}} =  - \otimes_R R/(\mathbf x)$.
\begin{enumerate}[\rm(a)]
\item\label{item14}
There is an inequality $\Td_{\ol R}\ol M\leq\Td_RM$, with equality holding
provided that $\Td_RM<\infty=\fd_{\ol R}\ol M$.
\item\label{item15}
One has $\Gfd_R
\ol M- n=\Gfd_{\ol R}\ol M\leq\Gfd_RM$, with equality holding in the second step when
$\Gfd_RM<\infty=\fd_{\ol R}\ol M$.
\end{enumerate}
\end{prop}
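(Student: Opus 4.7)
The plan is to handle the inequalities first, reduce the equality in (b) to that in (a), and then prove (a)'s equality by induction, following (and correcting) the strategy in~\cite{sahandi:dabf}.

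For the inequalities in (a) and (b): the bound $\Td_{\ol R}\ol M \leq \Td_R M$ follows from the base-change isomorphism $\Tor^{\ol R}_i(L,\ol M) \cong \Tor^R_i(L,M)$ for any $\ol R$-module $L$, which is a consequence of the vanishing $\Tor^R_j(M,\ol R)=0$ for $j\geq 1$ guaranteed by $M$-regularity of $\mathbf x$, together with the estimate $\fd_R L \leq \fd_{\ol R}L + n$ coming from $\fd_R \ol R = n$.  The bound $\Gfd_{\ol R}\ol M \leq \Gfd_R M$ is Theorem~\ref{thm09}, and the first equality $\Gfd_R\ol M - n = \Gfd_{\ol R}\ol M$ is the standard change-of-rings formula $\Gfd_R N = \Gfd_{\ol R}N + n$ for $\ol R$-modules $N$, provable by splicing a proper Gorenstein flat $\ol R$-resolution of $N$ with the Koszul resolution of $\ol R$ over $R$.

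For (b)'s equality under the hypothesis $\Gfd_R M < \infty = \fd_{\ol R}\ol M$: Theorem~\ref{thm09} gives $\Gfd_{\ol R}\ol M < \infty$, and since both Gorenstein flat dimensions are finite, each equals the corresponding large restricted flat dimension by~\cite[3.14]{H}. Hence (b)'s equality reduces to (a)'s equality.

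For (a)'s equality, I would induct on $t = \Td_R M$.  The base case $t=0$ is immediate because $\ol M \neq 0$ (forced by $\fd_{\ol R}\ol M=\infty$), giving $0\leq\Td_{\ol R}\ol M\leq\Td_R M=0$.  For the inductive step, fix a short exact sequence $0\to K\to F\to M\to 0$ with $F$ flat, giving $\Td_R K = t - 1$ via the usual dimension shift in $\Tor^R$. The regularity of $\mathbf x$ on $M$ yields exactness of the base-changed sequence $0\to\ol K\to\ol F\to\ol M\to 0$ over $\ol R$, and the hypothesis $\fd_{\ol R}\ol M=\infty$ rules out both $\ol K=0$ (else $\ol M\cong\ol F$ would be flat) and $\fd_{\ol R}\ol K<\infty$ (else $\fd_{\ol R}\ol M\leq\fd_{\ol R}\ol K+1<\infty$).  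A snake-lemma argument for multiplication by each $x_i$ then shows $\mathbf x$ is $K$-regular, so the inductive hypothesis applies to give $\Td_{\ol R}\ol K = t-1$; dimension-shifting in the long exact $\Tor^{\ol R}$ sequence recovers $\Td_{\ol R}\ol M = t$.

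The main obstacle is the $t=1$ case: the induction supplies only $\Td_{\ol R}\ol K = 0$ and hence $\Td_{\ol R}\ol M \leq 1$, but upgrading to equality requires producing an $\ol R$-module $L'$ with $\fd_{\ol R}L' < \infty$ and $\Tor^{\ol R}_1(L',\ol M)\neq 0$. Here the hypothesis $\fd_{\ol R}\ol M = \infty$ is indispensable (as Remark~\ref{rmk130709b} shows that equality fails precisely when $\fd_{\ol R}\ol M < \infty$): one argues that $\ol K\hookrightarrow\ol F$ cannot be pure against all finite-flat-dimension $\ol R$-modules, for otherwise $\ol M$ would be flat, and then descends the witness for $\Tor^R_1(L,M)\neq 0$ to a finite-flat-dimension $\ol R$-module via the vanishing of higher Koszul homology on $\mathbf x$.
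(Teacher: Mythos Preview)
The paper gives essentially no proof beyond pointing to an analysis of~\cite[3.10--3.11]{sahandi:dabf} in light of Remark~\ref{rmk130709b}, so your reconstruction of the argument is the right spirit, and your treatment of the inequalities together with the reduction of (b)'s equality to (a)'s via the identity $\Gfd=\Td$ (valid when $\Gfd$ is finite) is correct.

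However, your inductive argument for the equality in (a) has a genuine gap at $t=1$, which you correctly isolate but do not close. Your proposed fix---that if $\ol K\hookrightarrow\ol F$ were pure against every finite-flat-dimension test module then $\ol M$ would be flat---is false. The condition ``$\Tor_1^{\ol R}(L',\ol M)=0$ for all $L'$ with $\fd_{\ol R}L'<\infty$'' is precisely $\Td_{\ol R}\ol M\leq 0$, and this does \emph{not} force $\fd_{\ol R}\ol M<\infty$: any Gorenstein flat $\ol R$-module that is not flat (and such modules exist over any non-regular Gorenstein ring) satisfies $\Td_{\ol R}=0$ while $\fd_{\ol R}=\infty$. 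Thus the hypothesis $\fd_{\ol R}\ol M=\infty$ is not, by itself, incompatible with $\Td_{\ol R}\ol M=0$, and your contradiction evaporates. Your alternative idea of descending the $R$-module witness $L$ for $\Tor_1^R(L,M)\neq 0$ to an $\ol R$-module via Koszul homology is too vague as written: the Koszul homology modules $H_q(\mathbf x;L)=\Tor_q^R(\ol R,L)$ are indeed $\ol R$-modules, but when $\mathbf x$ is not $L$-regular there is no evident reason they have finite flat dimension over $\ol R$, so the spectral-sequence descent does not go through without substantial additional argument.
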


\section{Relative Homological Algebra with Flat Cotorsion Modules} \label{sec130705a}

This section is based on the next application of Theorem~\ref{thm04}.

\begin{obs} \label{rmk04}
Let $M$ be an $R$-module with $\operatorname{Gfd}_RM<\infty$.
Theorem~\ref{thm04} shows that
we can construct a proper CF coresolution  $M\to C$. 
As with the relative derived functors from Section~\ref{sec02},
the following relative derived functors
of Hom with respect to the class $\operatorname{CF}(R)$ are well-defined:
$$\text{Ext}^n_{\CF}(N,M)= \HH^n(\text{Hom}_R(N, C)).$$
\end{obs}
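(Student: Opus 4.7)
The plan is to build the proper CF coresolution by iterating Theorem~\ref{thm04}, verify properness using the vanishing of $\Ext^{\geq 1}_R(G,C')$ for Gorenstein flat $G$ and $C'\in\CF(R)$, and then obtain well-definedness of $\Ext^n_{\CF}(N,M)$ via a dual of Lemma~\ref{lem01}. Throughout, I rely on the fact that every Gorenstein flat module has $\Gfd = 0 < \infty$, so Theorem~\ref{thm04} remains applicable at each stage.

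First I would set $M_0 = M$ and apply Theorem~\ref{thm04} to obtain a monic CF-preenvelope $\varphi_0 \colon M_0 \to K_0$ with $K_0 \in \CF(R)$ and $M_1 := \coker\varphi_0 \in \GF(R)$. Since $\Gfd_R M_1 = 0$, Theorem~\ref{thm04} applies to $M_1$, yielding a monic CF-preenvelope $\varphi_1\colon M_1\to K_1$ with $K_1 \in \CF(R)$ and $M_2:=\coker\varphi_1 \in \GF(R)$. Iterating and splicing the compositions $K_i \twoheadrightarrow M_{i+1}\hookrightarrow K_{i+1}$ yields an exact augmented complex
$$0\to M\to K_0\to K_1\to K_2\to\cdots$$
which is a CF coresolution $C$ of $M$.

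Next, I would verify properness by examining the short exact sequences
$$0\to M_i\to K_i\to M_{i+1}\to 0$$
arising from the construction. Since $M_{i+1}$ is Gorenstein flat, \cite[3.22]{H} gives $\Ext^j_R(M_{i+1},C')=0$ for all $j\geq 1$ and all $C'\in\CF(R)$, so applying $\Hom_R(-,C')$ to each short exact sequence produces a short exact sequence of $\Hom$-modules. Splicing these back together shows that $\Hom_R(C^+,C')$ is acyclic, which is exactly the CF-coexactness required for properness.

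Finally, for well-definedness of $\Ext^n_{\CF}(N,M)$, I would establish the appropriate dual of Lemma~\ref{lem01}\eqref{item26}: given any CF coresolution $M\to D$ and any proper CF coresolution $M\to C$, there is a morphism $D\to C$ covering $\id_M$, unique up to homotopy; when both are proper, such a morphism is a homotopy equivalence. The proof is dual to that of~\cite[(4.3)]{AvM}, using the CF-coexactness of $C^+$ to lift degree-by-degree. Consequently $\HH^n(\Hom_R(N,C))$ is independent of the proper CF coresolution chosen, so $\Ext^n_{\CF}(N,M)$ is well-defined, and functoriality in both arguments follows from the uniqueness-up-to-homotopy statements as in Remark~\ref{rmk130702a}. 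The only real step requiring thought is verifying that the iteration can always proceed, and this is guaranteed by the Gorenstein flatness of every cokernel; no significant obstacle arises.
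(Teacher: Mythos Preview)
Your proposal is correct and is exactly what the paper's remark points to: iterate Theorem~\ref{thm04} to build the coresolution, then invoke the dual of Lemma~\ref{lem01} (as in Section~\ref{sec02}) for well-definedness. Note that your separate properness verification, while correct, is redundant---Theorem~\ref{thm04} already asserts that each $\varphi_i$ is a CF-\emph{preenvelope}, and a coresolution assembled from preenvelopes is proper by definition.
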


\begin{thm} \label{thm06}
If 
$M$ is an $R$-module with
$\operatorname{Gfd}_RM <\infty$, then 
\begin{align*}
\operatorname{CF-id}_RM
&=\operatorname{sup}\{n
\mid\text{$\operatorname{Ext}^n_{\CF}(N,M)\neq 0$ for some $R$-module $N$}\}\\
&=\operatorname{sup}\{i\mid \text{$\operatorname{Ext}^i_{\CF}(G,M)\neq
0$ for some $G\in \operatorname{GF}(R)$}\}.
\end{align*}
\end{thm}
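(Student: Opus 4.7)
Let $c=\operatorname{CF-id}_RM$ and denote by $s_1,s_2$ the two suprema on the right-hand side, in that order. The containment $\GF(R)\subseteq\M(R)$ gives $s_2\leq s_1$, so the proof reduces to $c\leq s_2$ and $s_1\leq c$.

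The key construction is iterated application of Theorem~\ref{thm04}: starting from $M$ and at each stage feeding the resulting Gorenstein flat cokernel back into Theorem~\ref{thm04} produces a proper CF-coresolution
\[
0\to M\to K_0\to K_1\to\cdots
\]
whose cycles $Z^j=\ker(K_j\to K_{j+1})$ are Gorenstein flat for every $j\geq 1$---these are precisely the Gorenstein flat cokernels furnished by Theorem~\ref{thm04} at each step. Properness holds because $\Ext^1_R(Z^{j+1},K)=0$ for each $K\in\CF(R)$ by~\cite[3.22]{H}, so each short exact sequence $0\to Z^j\to K_j\to Z^{j+1}\to 0$ is $\Hom_R(-,K)$-exact.

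For $c\leq s_2$, assume $s_2=n<\infty$. Each $0\to Z^j\to K_j\to Z^{j+1}\to 0$ is CF-coexact, and $0\to K\to 0$ is a proper CF-coresolution of every $K\in\CF(R)$, so $\Ext_{\CF}^{\geq 1}(-,K)=0$. Iterating the $\Ext_{\CF}$-analog of the long exact sequence of Proposition~\ref{thm01} produces the dimension-shift $\Ext_{\CF}^{n+1}(N,M)\cong\Ext_{\CF}^1(N,Z^n)$. Taking $N=Z^{n+1}\in\GF(R)$ and invoking the hypothesis $\Ext_{\CF}^{n+1}(Z^{n+1},M)=0$ yields $\Ext_{\CF}^1(Z^{n+1},Z^n)=0$; the standard connecting-map argument (the image of $1_{Z^{n+1}}$ under the connecting homomorphism is the characteristic class of $0\to Z^n\to K_n\to Z^{n+1}\to 0$) then shows this sequence splits. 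Hence $Z^n$ is a direct summand of $K_n\in\CF(R)$ and therefore lies in $\CF(R)$, and the truncated coresolution $0\to M\to K_0\to\cdots\to K_{n-1}\to Z^n\to 0$ is a CF-coresolution of length $n$, giving $c\leq n=s_2$.

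For $s_1\leq c$, the strategy is to show that the canonical proper CF-coresolution above can be taken of length at most $c=\CF\text{-id}_RM$. Given any CF-coresolution of $M$ of length $c$, the CF-preenvelope property of $M\to K_0$ and (inductively) of each $Z^j\to K_j$ produces a chain map from the canonical proper CF-coresolution to the given length-$c$ one extending $\operatorname{id}_M$; tracking the cycles of the canonical coresolution through this chain map forces $Z^c\in\CF(R)$, so one may truncate the canonical proper coresolution at position $c$. The truncated proper coresolution then computes $\Ext_{\CF}^{>c}(N,M)=0$ for every $R$-module $N$, yielding $s_1\leq c$. The main obstacle is this last comparison step, which amounts to establishing the dimension-theorem analogue of Theorem~\ref{thm14} for $\Ext_{\CF}$ by carefully transferring finiteness of $\CF\text{-id}_RM$ to the length of a proper CF-coresolution; once this is in place the three quantities coincide, while the other direction $c\leq s_2$ and the splitting via the characteristic class proceed as outlined.
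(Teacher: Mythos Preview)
Your approach is essentially the paper's own: you build the same proper CF-coresolution by iterating Theorem~\ref{thm04}, you prove $c\leq s_2$ by the same dimension-shift and splitting argument, and you aim to get $s_1\leq c$ by truncating that coresolution at degree $c$. The paper's proof is identical in structure; where you write ``the main obstacle'' the paper simply writes ``it follows readily that the truncation \ldots\ is a proper CF coresolution.''

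The point you flag as an obstacle deserves a cleaner treatment than the chain-map comparison you sketch. Having a chain map $K^+\to C^+$ from the proper coresolution to an arbitrary length-$c$ CF-coresolution does not by itself force $Z^c\in\CF(R)$; you would still need a mapping-cone or Schanuel-type argument, and that argument ultimately rests on closure properties of $\CF(R)$, not on the chain map. The direct route is this: since a first syzygy of a flat module is flat, one has $\Ext^i_R(F,A)=0$ for every $i\geq 1$, every flat $F$, and every cotorsion $A$; hence $\CF(R)$ is closed under extensions and under cokernels of monomorphisms. With these closure properties in hand, a pushout (Schanuel) argument shows that for any two CF-coresolutions of $M$ the $j$-th cosyzygies lie in $\CF(R)$ simultaneously; in particular $Z^c\in\CF(R)$ whenever some CF-coresolution has length $c$. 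This is what the paper is invoking implicitly, and it replaces the vague ``tracking cycles through the chain map'' step with a two-line verification.
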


\begin{proof}
Set $g:=\operatorname{CF-id}_RM$ and
\begin{align*}
s
&:=\operatorname{sup}\{n
\mid\text{$\operatorname{Ext}^n_{\CF}(N,M)\neq 0$ for some $R$-module $N$}\}\\
t&:=\operatorname{sup}\{i\mid \text{$\operatorname{Ext}^i_{\CF}(G,M)\neq
0$ for some $G\in \operatorname{GF}(R)$}\}.
\end{align*}
We show that $g\leq t\leq s\leq g$. The inequality $t\leq s$ is routine.

Since $\operatorname{Gfd}_RM<\infty$,
Theorem~\ref{thm04} provides a proper CF coresolution
$$0\to M\to
Q^0\xra{\partial^0}
Q^1\xra{\partial^1}\cdots$$ such that
$\operatorname{Gfd_RM=fd}_RQ^0$ and $\operatorname{fd}_RQ^j=0=\Gfd_RG_j$ for $j\geqslant
1$ where $G_j:=\Ker \partial^j$. Hence each exact sequence 
$$0\to G_j\to Q^j\to
G_{j+1}\to 0$$ is CF-coexact. A long exact sequence argument implies that 
$\operatorname{Ext}^{i+j}_{\CF}(-,M)\cong\operatorname{Ext}^i_{\CF}(-,G_j)$ for
$i>0$ and $j\geqslant 0$. 

For the inequality $g\leq t$, assume without loss of generality that $t<\infty$.
Thus, we have $\operatorname{Ext}^n_{\CF}(G_j,M)=0$ for all
$n>t$ and all $j\geq 1$. 
In particular, this implies 
$\operatorname{Ext}^{1}_{\CF}(G_{t+1}, G_t)\cong\operatorname{
Ext}^{1+t}_{\CF}(G_{t+1},M)=0$. It follows that $Q^t=G_t\oplus G_{t+1}$ and hence
$G_t\in\operatorname{CF}(R)$.
We conclude that $g\leqslant t$, as desired.

For the  inequality $s\leq g$, assume without loss of generality that 
$g<\infty$.
It follows readily that the truncation
$$0\to M\to
Q^0\to
Q^1\to\cdots\to Q^{g-1}\to G_g\to 0$$
is a proper CF coresolution. Computing $\operatorname{Ext}^i_{\CF}(-,M)$ from this resolution,
we conclude that $\operatorname{Ext}^i_{\CF}(-,M)=0$ for all $i>s$.
The inequality $s\leq g$ now follows.
\end{proof}

\begin{thm} \label{thm07}
Let $M$ be an R-module with
$\operatorname{Gfd}_RM<\infty$, and let $p\in\mathbb N$ be given. 
\begin{enumerate}[\rm(a)]
\item \label{item06} 
The module $M$ is $p$-cotorsion with $\operatorname{fd}_RM<\infty$ if and only
if
$\operatorname{Ext}^n_{\CF}(G,M)=0=\operatorname{Ext}^n_R(G, M)$
for
$n>p$ and each $G\in \operatorname{GF}(R)$.
\item \label{item07} 
The module $M$ is strongly $p$-cotorsion with $\operatorname{fd}_RM<\infty$ if
and only if one has
$\operatorname{Ext}^n_{\CF}(N,M)=0=\operatorname{Ext}^n_R(N,M)$ for
$n>p$ and for each R-module $N$ with
$\operatorname{Gfd}_RN<\infty$.
\end{enumerate}
\end{thm}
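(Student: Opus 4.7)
The plan hinges on one preliminary vanishing: for every $G\in\GF(R)$ and $C\in\CF(R)$, one has $\Ext^n_R(G,C)=0$ for all $n\geq 1$. This follows from Theorem~\ref{thm03}(b), which identifies $\Ext^n_{\GF}(G,C)$ with $\Ext^n_R(G,C)$ (since $C$ is cotorsion of finite flat dimension), combined with Theorem~\ref{thm14}, which forces $\Ext^n_{\GF}(G,-)=0$ for $n\geq 1$ (since $\Gfd_R G=0$). This is precisely what will permit dimension-shifting along a CF coresolution against arbitrary Gorenstein flat test modules.

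For part~(a), the backward direction is quick. The vanishing of $\Ext^n_{\CF}(G,M)$ for all $G\in\GF(R)$ and $n>p$ gives $\operatorname{CF-id}_R M\leq p$ by Theorem~\ref{thm06}, and iteration of short exact sequences in the resulting finite CF coresolution yields $\fd_R M<\infty$. The vanishing of $\Ext^n_R(F,M)$ for flat $F$ and $n>p$ is immediate from the hypothesis since every flat module lies in $\GF(R)$, showing $M$ is $p$-cotorsion.

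The forward direction of (a) is the crux and the main obstacle. Assuming $M$ is $p$-cotorsion with $\fd_R M<\infty$, I would invoke the construction from the proof of Theorem~\ref{thm06} to produce an exact coresolution
\[
0\to M\to Q^0\to Q^1\to\cdots\to Q^{p-1}\to G_p\to 0
\]
with $Q^0\in\CF(R)$ of flat dimension $\Gfd_R M$, each $Q^j$ flat cotorsion for $j\geq 1$, and $G_p$ Gorenstein flat. Iterating the preliminary vanishing along the associated short exact sequences yields the isomorphism $\Ext^1_R(F,G_p)\cong\Ext^{p+1}_R(F,M)$ for every flat $F$; the $p$-cotorsion hypothesis kills the right-hand side, so $G_p$ is cotorsion. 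Finiteness of $\fd_R G_p$ is forced by the finite exact sequence above (all other terms having finite flat dimension), and a Gorenstein flat module of finite flat dimension is flat, so $G_p\in\CF(R)$ and $\operatorname{CF-id}_R M\leq p$. The vanishing of $\Ext^n_{\CF}(G,M)$ for $n>p$ now follows from Theorem~\ref{thm06}, and the vanishing of $\Ext^n_R(G,M)$ for $n>p$ comes from a parallel dimension-shift along this finite CF coresolution, again using the preliminary vanishing.

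Part~(b) reduces to part~(a). Backwards: any $X$ with $\fd_R X<\infty$ satisfies $\Gfd_R X<\infty$, so the hypothesis specializes to give strong $p$-cotorsion; taking $N\in\GF(R)$ in the $\Ext_{\CF}$-hypothesis recovers $\fd_R M<\infty$ as above. Forwards: strong $p$-cotorsion implies $p$-cotorsion, so part~(a) delivers $\operatorname{CF-id}_R M\leq p$; since the defining CF coresolution then has length at most $p$, the functor $\Ext^n_{\CF}(N,M)$ vanishes trivially for every $N$ and every $n>p$. For the $\Ext^n_R(N,M)$ vanishing when $\Gfd_R N<\infty$, I would apply Theorem~\ref{thm04} to $N$ to obtain a short exact sequence $0\to N\to K\to G\to 0$ with $K\in\CF(R)$ and $G\in\GF(R)$, and then read the conclusion off the long exact sequence in $\Ext_R(-,M)$: the terms involving $G$ vanish by part~(a), and the terms involving $K$ vanish because $\fd_R K<\infty$ and $M$ is strongly $p$-cotorsion.
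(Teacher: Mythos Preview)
Your proposal is correct and follows essentially the same strategy as the paper: both arguments dimension-shift along the explicit CF coresolution built in the proof of Theorem~\ref{thm06}, using the vanishing $\Ext^{\geq 1}_R(G,C)=0$ for $G\in\GF(R)$ and $C\in\CF(R)$, and both reduce part~(b) to part~(a) via the approximation sequence of Theorem~\ref{thm04}. The only cosmetic difference is in the forward direction of~(a): the paper shows $G_p\in\CF(R)$ by splitting the short exact sequence $0\to G_p\to Q^p\to G_{p+1}\to 0$ (after observing that $\fd_RM<\infty$ forces each $G_j$ with $j\geq 1$ to be flat), whereas you verify directly that $G_p$ is cotorsion of finite flat dimension---your extra remark that $G_p$ is then flat is true but not needed for membership in $\CF(R)$.
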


\begin{proof}
We recycle the notation $M\to Q$ and $G_j$ from the proof of Theorem~\ref{thm06}.

\eqref{item06}
For the forward implication, assume that $M$ is $p$-cotorsion with
$\operatorname{fd}_RM<\infty$. Then, in the CF-coresolution $M\to Q$, for $j\geqslant 1$
the module $G_j$  is flat. From~\cite[(3.18)]{H}, we have
$\Ext^i_R(G,Q^j)=0$ for all $G\in\GF(R)$, all $i\geq 1$, and all $j\geq 0$.
Thus, a long exact sequence argument shows that
$\operatorname{Ext}^{i+j}_R(G,M)\cong
\operatorname{Ext}^{i}_R(G, G_j)$
for $j\geqslant 0$
and $i>0$. 
The fact that  $M$ is
$p$-cotorsion implies that $\operatorname{Ext}^1_R(G_{p+1}, G_p)\cong
\operatorname{Ext}^{p+1}_R(G_p,M)=0$. Thus, the following exact sequence  splits
$$0\to G_p \to Q^p \to
G_{p+1} \to 0.$$
We conclude that $G_p$ is cotorsion flat and
hence $g:=\operatorname{CF-id}_RM\leqslant p$. Theorem~\ref{thm06} implies that
$\operatorname{Ext}^n_{\CF}(G,M)=0$ for $n>p$. 
Dimension-shifting again, for $n>p\geq g$, we find that that
$\operatorname{Ext}^{n}_R(G,M)\cong\operatorname{
Ext}^{n-g}_R(G,G_g)=0$ for all $G\in\GF(R)$, as desired.

For the  converse, 
assume that
$\operatorname{Ext}^n_{\CF}(G,M)=0=\operatorname{Ext}^n_R(G, M)$
for
each $n>p$ and each $G\in \operatorname{GF}(R)$.
Theorem~\ref{thm06} implies that $\operatorname{CF-id}_RM\leqslant p$. In particular, we have
$\operatorname{fd}_RM<\infty$ and $\operatorname{Ext}^n_R(F,M)=0$
for  $n>p$ and each flat $R$-module $F$.

\eqref{item07}
For the forward implication, assume that $M$ is strongly $p$-cotorsion with $\operatorname{fd}_RM<\infty$,
and let $N$ be an $R$-module with $g:=\operatorname{Gfd}_RN<\infty$.
If $g=0$, then the desired conclusion follows from part~\eqref{item06}.
So assume $g>0$. By Theorem~\ref{thm04} there is an exact
sequence 
$$0\to N\to K\to
G\to 0$$ 
with $\operatorname{fd}_RK=g$, such that $K$ is cotorsion and
$G\in \GF(R)$. For $n>p$, 
the fact that $M$ is strongly $p$-cotorsion implies that $\operatorname{Ext}^n_R(K,M)=0$.
Part~\eqref{item06} implies that $\operatorname{Ext}^{n+1}_R(G,M)=0$, so the 
long exact sequence in $\Ext_R(-,M)$ implies that $\operatorname{Ext}^n_R(N,M)=0$.
The proof of part~\eqref{item06} shows that $\operatorname{CF-id}_RM\leqslant p$, thus
by Theorem~\ref{thm06} we have $\operatorname{Ext}^n_{\CF}(N,M)=0$ for $n>p$. 

The
converse is proved as in part~\eqref{item06}.
\end{proof}

\begin{cor} \label{cor02}
Let $M$ be an R-module with $\operatorname{fd}_RM<\infty $. 
\begin{enumerate}[\rm(a)]
\item \label{item08}
If $M$ is cotorsion and $G\in \GF(R)$, then
$\operatorname{Ext}^n_{\GF}(G,M)\cong\operatorname{Ext}^n_{\CF}(G,M)\cong
\operatorname{Ext}^n_R(G,M)=0$ for all $n\geq 1$. 
\item \label{item09}
If $M$ is strongly cotorsion, then one has
$\operatorname{Ext}^n_{\GF}(N,M)\cong\operatorname{Ext}^n_{\CF}(N,M)\cong
\operatorname{Ext}^n_R(N,M)=0$ for all $n\geq 1$ and all $R$-modules $N$ with $\operatorname{Gfd}_RN<\infty$.
\end{enumerate}
\end{cor}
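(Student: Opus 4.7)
Both parts follow quickly by specializing Theorem~\ref{thm07} to $p=0$ and combining with the comparison results of Section~\ref{sec02}. In either part, $\fd_RM<\infty$ forces $\Gfd_RM<\infty$, so the hypothesis of Theorem~\ref{thm07} is satisfied.

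For part~\eqref{item08}, the hypothesis makes $M$ a $0$-cotorsion module of finite flat dimension, so Theorem~\ref{thm07}\eqref{item06} applied with $p=0$ immediately yields $\Ext^n_{\CF}(G,M)=0=\Ext^n_R(G,M)$ for all $n\geq 1$ and every $G\in\GF(R)$. To handle the remaining term $\Ext^n_{\GF}(G,M)$, I would observe that $\Gfd_RG=0$ and apply Theorem~\ref{thm14}, which then gives $\Ext^n_{\GF}(G,-)=0$ for every $n\geq 1$. Since all three modules in question vanish, the asserted isomorphisms are trivial.

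Part~\eqref{item09} is handled by the parallel argument: strongly cotorsion implies cotorsion, so $M\in\CF(R)$, and Theorem~\ref{thm07}\eqref{item07} with $p=0$ produces $\Ext^n_{\CF}(N,M)=0=\Ext^n_R(N,M)$ for $n\geq 1$ and every $N$ with $\Gfd_RN<\infty$. For $\Ext^n_{\GF}(N,M)$, however, one cannot reuse the Theorem~\ref{thm14} route from part~\eqref{item08}, since $\Gfd_RN$ may be strictly positive. The clean alternative is Theorem~\ref{thm03}\eqref{item05}: because $M$ is cotorsion with $\fd_RM<\infty$, the natural map $\eta^n_{NM}\colon\Ext^n_{\GF}(N,M)\to\Ext^n_R(N,M)$ is an isomorphism for every $n\geq 0$, and combining this with the vanishing of $\Ext^n_R(N,M)$ already established delivers $\Ext^n_{\GF}(N,M)=0$ for $n\geq 1$. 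This bookkeeping choice in part~\eqref{item09} — routing the $\Ext_{\GF}$ vanishing through Theorem~\ref{thm03}\eqref{item05} rather than Theorem~\ref{thm14} — is really the only point that requires care; no extra computation is needed.
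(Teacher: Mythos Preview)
Your argument is correct and matches the paper's approach: the paper's proof is the single line ``This follows from Theorems~\ref{thm03}\eqref{item05} and~\ref{thm07},'' which is exactly the combination you use. The only cosmetic difference is that in part~\eqref{item08} you obtain the vanishing of $\Ext^n_{\GF}(G,M)$ via Theorem~\ref{thm14} (using $\Gfd_RG=0$) rather than via Theorem~\ref{thm03}\eqref{item05}; both routes are immediate, and the paper's citation of Theorem~\ref{thm03}\eqref{item05} handles parts~\eqref{item08} and~\eqref{item09} uniformly.
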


\begin{proof}
This follows from Theorems~\ref{thm03}\eqref{item05} and~\ref{thm07}.
\end{proof}

We conclude by showing that Rid is a refinement of Gid
over almost Cohen-Macaulay local rings.

\begin{thm}\label{thm08}
Let $M$ be an $R$-module.
\begin{enumerate}[\rm(a)]
\item\label{item130711a}
One has $\operatorname{Rid}_RM\leqslant\operatorname{Gid}_RM$.
\item\label{item130711b}
If $\operatorname{Gid}_RM<\infty$, then equality holds in part~\eqref{item130711a} when
\begin{enumerate}[\rm(1)]
\item $M$ is Gorenstein injective, or
\item $R$ is local such that $\dim R-\depth R\leqslant 1$.
\end{enumerate}
\end{enumerate}
\end{thm}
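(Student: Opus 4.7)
For $\Gid_R M = \infty$ the inequality is trivial, so assume $g = \Gid_R M < \infty$. My plan is to first establish the key lemma: for every Gorenstein injective $G$ and every $X$ with $\pd_R X < \infty$, $\Ext^i_R(X, G) = 0$ for all $i \geq 1$. Fixing a complete injective resolution $I^{\bullet}$ of $G$, I would show by induction on $\pd_R X$ that $\Hom_R(X, I^{\bullet})$ is acyclic. The base case $X$ projective is immediate; in the inductive step, a short exact sequence $0 \to K \to P \to X \to 0$ with $P$ projective induces, via exactness of $\Hom_R(-, I^n)$ (each $I^n$ injective), a short exact sequence of complexes whose outer terms are acyclic by induction, so the inner term is too. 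Since the right half of $I^{\bullet}$ is an injective coresolution of $G$, the lemma follows. Then dimension-shifting along a Gorenstein injective coresolution $0 \to M \to G^0 \to \cdots \to G^g \to 0$ yields $\Ext^i_R(X, M) = 0$ for all $X$ of finite pd and $i > g$, which is (a).

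\textbf{Part (b)(1).} If $M \neq 0$ is Gorenstein injective then $\Gid_R M = 0$, and since $\Hom_R(R, M) = M \neq 0$ with $\pd_R R = 0$, we have $\operatorname{Rid}_R M \geq 0$; equality follows from (a). (The case $M = 0$ is trivial.)

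\textbf{Part (b)(2) and main obstacle.} I would induct on $g = \Gid_R M$; the case $g = 0$ is (b)(1). For $g \geq 2$, a short exact sequence $0 \to M \to G^0 \to M^1 \to 0$ extracted from a Gorenstein injective coresolution satisfies $\Gid_R M^1 = g - 1$, so by induction there is $X$ with $\pd_R X < \infty$ and $\Ext^{g-1}_R(X, M^1) \neq 0$; the vanishings $\Ext^{g-1}_R(X, G^0) = 0 = \Ext^g_R(X, G^0)$ from the lemma in (a) give $\Ext^g_R(X, M) \cong \Ext^{g-1}_R(X, M^1) \neq 0$, yielding $\operatorname{Rid}_R M \geq g$. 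The main obstacle is the case $g = 1$, where dimension-shifting is blocked because $\Ext^0_R(X, G^0)$ need not vanish, and one must produce a finite-pd test module directly realizing $\Ext^1_R(X, M) \neq 0$. The almost Cohen-Macaulay hypothesis $\dim R - \depth R \leq 1$ enters here: taking a maximal $R$-regular sequence $\mathbf{x}$ of length $\depth R$, the module $X = R/(\mathbf{x})$ has $\pd_R X = \depth R < \infty$, and I would invoke depth-sensitive characterizations of $\Gid_R M$ and $\operatorname{Rid}_R M$ (available under the almost CM hypothesis, e.g.\ via the depth defect $\depth R - \operatorname{width}_R M$) to conclude $\Ext^1_R(R/(\mathbf{x}), M) \neq 0$. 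The hypothesis is essential precisely because without it, Holm's characterization only guarantees that test modules of finite \emph{injective} dimension detect $\Gid_R M$, so a matching finite-pd test module need not exist in general.
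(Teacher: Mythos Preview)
Your treatment of part (a) and part (b)(1) is correct and essentially matches the paper's argument (your proof of the key lemma by induction on $\pd_R X$ is a minor variation on the paper's dimension-shift along the left half of the complete injective resolution, and your direct observation $\Ext^0_R(R,M)\neq 0$ replaces the paper's citation for $\operatorname{Rid}_R M\geq 0$).

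The gap is in part (b)(2), precisely where you flag it. Your induction for $g\geq 2$ is fine, but your handling of $g=1$ is not a proof: you propose to test with $X=R/(\mathbf x)$ for a maximal regular sequence $\mathbf x$ and then ``invoke depth-sensitive characterizations'' of $\Gid$ and $\operatorname{Rid}$, but no such formula is supplied, and none is available in the generality needed. What is known (e.g.\ Holm) is that $\Gid_R M$ is detected by modules of finite \emph{injective} dimension; there is no reason $R/(\mathbf x)$, or any other specific module of finite projective dimension, should witness $\Ext^1_R(-,M)\neq 0$ when $\Gid_R M=1$. The width-type formula you allude to would, if it existed in the form you need, prove the equality $\operatorname{Rid}_R M=\Gid_R M$ outright and render the induction superfluous; that it does not is exactly why the almost Cohen--Macaulay hypothesis is delicate here.

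The paper avoids this obstruction by a different construction that works uniformly for all $g\geq 1$. From the approximation $0\to M\to G\to K\to 0$ with $G$ Gorenstein injective and $\id_R K=g-1$, together with a piece $0\to H\to E\to G\to 0$ of a complete injective resolution of $G$, one builds a module $N$ with $M\cong N/H$ and an exact sequence $0\to N\to E\to K\to 0$. The point is that $N$ has \emph{honest} injective dimension $g$, so the known equality $\operatorname{Rid}_R N=\id_R N$ over almost Cohen--Macaulay local rings (Christensen--Foxby--Frankild) produces a finite-$\pd$ module $X$ with $\Ext^g_R(X,N)\neq 0$; the long exact sequence for $0\to H\to N\to M\to 0$ and the vanishing $\Ext^{\geq 1}_R(X,H)=0$ then transport this to $\Ext^g_R(X,M)\neq 0$. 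The key idea you are missing is this passage from $M$ to an auxiliary module of finite injective dimension, which is what allows the cited result to bite.
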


\begin{proof}
Assume without loss of generality that $g:=\operatorname{Gid}_RM<\infty$.
We argue  by induction on $g$.

For the base case, assume that
$g=0$; we show that $\operatorname{Rid}_RM= 0$. In this case, $M$ is non-zero and has a complete injective
resolution 
$$E=\cdots\to E^{-2}\xra{\partial^{-2}}  E^{-1}
\xra{\partial^{-1}} 
E^0\xra{\partial^0}  E^{1}
\to\cdots.$$ 
Set $N_i=\Ker\partial^{-i}$ for
each $i$. For each $R$-module $X$ and each $n,d\geq 1$, dimension-shifting implies that
$\operatorname{Ext}^n_R(X,M)\cong
\operatorname{Ext}^{n+d}_R(X,N_d)$. 
Assume
that $\operatorname{pd}_RX<\infty$. 
Then, using $d\geq\pd_RX$, we find that $\operatorname{Ext}^n_R(X,M)=0$
for all $n\geq 1$, so $\operatorname{Rid}_RM\leq 0$. The inequality $\operatorname{Rid}_RM\geq 0$
is from~\cite[(5.11)]{Chr}.

For the induction step, assume
$g\geqslant 1$. By~\cite[2.15]{H} there is an exact sequence 
$$0\to M\to G\to K\to 0$$
such that $G\in
\GI(R)$  and $\operatorname{id}_RK=g-1$.
Consider an $R$-module $X$ with $\operatorname{pd}_RX<\infty$.
The base case, coupled with the long exact sequence in $\Ext_R^i(X,-)$ implies that
$\operatorname{Ext}^i_R(X,M)=0$ for 
$i>g$, so $\operatorname{Rid}_RM\leqslant g$. 

To complete the proof, assume that $R$ is local such that $\dim R-\depth R\leqslant 1$.
Part of a complete injective resolution of $G$ 
yields an exact sequence 
$$0\to H\to E\to G\to 0$$
such
that $E$ is injective and $H$ is Gorenstein injective.
Identify $G$ with $E/H$ so that the submodule $M\subseteq G$ is of the form 
$M= N/H$ for some submodule $N\subseteq E$.
It follows that we have $E/N\cong
G/M\cong K$, hence the next exact sequence.
$$0\to
N\to E\to K\to 0.$$ 
Since
$\operatorname{id}_RK=g-1$, we have $\operatorname{id}_RN= g$. 
Since $\dim R -\depth R\leq 1$, we conclude from~\cite[(5.13)]{Chr} that
$\operatorname{Rid}_RN=g$, so there is an $R$-module $X$ with
$\operatorname{pd}_RX<\infty$ such that $\operatorname{Ext}^g_R(X,N)\neq
0$. 
Apply the base case to $H$, and use the long exact sequence in $\Ext^i_R(X,-)$
associated to the sequence
$$0\to H\to N\to M\to 0$$
to conclude that $\operatorname{Ext}^g_R(X,M)\neq
0$. It follows that $\operatorname{Rid}_RM\geq g$, as desired.
\end{proof}

\begin{cor} \label{cr03}
Let $M$ be an $R$-module such that $\operatorname{Gid}_RM<\infty$.
\begin{enumerate}[\rm(a)]
\item \label{item10}
If $\dim R<\infty$ and $\operatorname{Gid}_RM\leqslant p$, then $M$ is
strongly $p$-cotorsion.
\item \label{item11}
If $R$ is local  with $\dim R-\depth R\leqslant 1$,
then M is strongly p-cotorsion if and only if
$\operatorname{Gid}_RM\leqslant p$.
\end{enumerate}
\end{cor}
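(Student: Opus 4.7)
The plan is to combine Theorem~\ref{thm08}, which relates $\operatorname{Rid}_RM$ to $\Gid_RM$, with the classical result of Jensen that over a noetherian ring of finite Krull dimension every flat module has projective dimension at most $\dim R$. Consequently, whenever $\dim R<\infty$, every $R$-module $X$ with $\fd_RX<\infty$ also satisfies $\pd_RX<\infty$, so the class used to compute $\operatorname{Rid}_RM$ coincides, for the purposes of $\Ext$-vanishing, with the class appearing in the definition of strong $p$-cotorsion.

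For part~\eqref{item10}, Theorem~\ref{thm08}\eqref{item130711a} yields $\operatorname{Rid}_RM\leq\Gid_RM\leq p$. If $X$ is an $R$-module with $\fd_RX<\infty$, then $\pd_RX<\infty$ by the Jensen-type observation above, so $\Ext^i_R(X,M)=0$ for all $i>p$ by the definition of $\operatorname{Rid}_RM$. This is precisely the defining condition for $M$ to be strongly $p$-cotorsion.

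For part~\eqref{item11}, the numerical hypothesis $\dim R-\depth R\leq 1$ forces $\dim R<\infty$, since $\depth R$ is finite for any local noetherian ring. The forward implication is then immediate from part~\eqref{item10}. For the converse, suppose $M$ is strongly $p$-cotorsion; since every $R$-module $X$ with $\pd_RX<\infty$ satisfies $\fd_RX<\infty$, strong $p$-cotorsion yields $\Ext^i_R(X,M)=0$ for all such $X$ and all $i>p$, whence $\operatorname{Rid}_RM\leq p$. The hypothesis $\Gid_RM<\infty$ then permits the application of Theorem~\ref{thm08}\eqref{item130711b}(2), giving $\Gid_RM=\operatorname{Rid}_RM\leq p$.

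The main obstacle is the invocation of Jensen's bound $\pd_RF\leq\dim R$ for flat $R$-modules $F$ over a noetherian ring of finite Krull dimension, which we use as a black box to promote finite flat dimension to finite projective dimension; everything else in the argument is direct bookkeeping with the inequalities and equalities supplied by Theorem~\ref{thm08}.
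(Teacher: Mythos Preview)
Your proof is correct and follows essentially the same approach as the paper: invoke Jensen's theorem to identify finite flat dimension with finite projective dimension when $\dim R<\infty$, then read off both parts from Theorem~\ref{thm08}. The paper compresses this into two sentences, while you have (helpfully) unpacked exactly how each inequality in Theorem~\ref{thm08} is used and made explicit that the local hypothesis in part~\eqref{item11} entails $\dim R<\infty$.
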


\begin{proof}
When $\dim R<\infty$, we know from~\cite{J,R}
that an $R$-module has finite flat dimension if and only if it has finite projective dimension.
Thus, the desired conclusions follow from Theorem~\ref{thm08}.
\end{proof}

\providecommand{\bysame}{\leavevmode\hbox to3em{\hrulefill}\thinspace}
\providecommand{\MR}{\relax\ifhmode\unskip\space\fi MR }
\providecommand{\MRhref}[2]{%
  \href{http://www.ams.org/mathscinet-getitem?mr=#1}{#2}
}
\providecommand{\href}[2]{#2}

\end{document}